\newtheorem{Theorem}{Theorem}
\newtheorem{theorem}{Theorem}[section]
\newtheorem{lemma}[theorem]{Lemma}
\newtheorem{corollary}[theorem]{Corollary}
\theoremstyle{definition}
\numberwithin{equation}{section}
\def\F{{\mathscr F}}
\def\C{C}
\def\Chi{\mathcal X}
\def\HH{\mathcal H}
\def\S{S}
\def\RR{{\mathbb R}}
\def\Ln{\lambda_n}
\def\esup { {\rm ess} \sup}
\def\supp { {\rm supp}\,}
\def\BV{\mathop{BV_{\rm loc}}}
\def\W11{\mathop{W^{1,1}_{\rm loc}}}
\def\Per{{\rm Per}\,}
\title[Extremals of the 
P\'olya-Szeg\H{o} inequality]
{On the extremals of the \\P\'olya-Szeg\H{o} inequality}
\author{Almut  Burchard and Adele Ferone}
\address{Department of Mathematics --- University of Toronto, 
40 St. George Street, Toronto, Canada M5S 2E4}
\email{almut@math.toronto.edu}
\address{Dipartimento di Matematica --- Seconda Universit\`a di Napoli,
Viale Lincoln 5, 81100 Caserta, Italy}
\email{adele.ferone@unina2.it}
\begin{document}

\begin{abstract} The distance of an extremal 
of the P\'olya-Szeg\H{o} inequality 
from a translate of its symmetric decreasing 
rearrangement is controlled by the 
measure of the set of critical points. 
\end{abstract}

\maketitle

\thispagestyle{empty}

\section{Introduction}
\label{sec:intro}

\noindent Let $u$ be a nonnegative function on $\RR^n$
that vanishes at infinity.
Many geometric inequalities
relate $u$ with its symmetric decreasing rearrangement, 
$u^\star$. 
The {\em P\'olya-Szeg\H{o} inequality} states that
\begin{equation}\label{PS}
\| \nabla u^\star \|_p\le \| \nabla u\|_p
\end{equation}
for every $1\le p\le \infty$ such that the distributional gradient
$|\nabla u|$ lies in $L^p$;
in particular, $|\nabla u^\star|$ again lies in $L^p$~\cite{PS}.
For $p=1$, this reduces to the
isoperimetric inequality, and for $p=\infty$, it follows from 
the fact that symmetric decreasing rearrangement
improves the modulus of continuity.

{
Inequality~\eqref{PS} has been extended in various directions.
It holds for general convex Dirichlet-type functionals~\cite{BZ, Ta97},
on the larger space of functions that are locally of bounded
variation~\cite{CiFu1}, and with other symmetrizations in place
of the symmetric decreasing rearrangement~\cite{AFLT, Ka, B, Br1}. 
The functionals that satisfy general P\'olya-Szeg\H{o} inequalities
have been fully characterized; they are
known to include all rearrangement-invariant norms~\cite[Theorem 1.2]{CiFe}.}

In this paper, we study functions that
produce equality in \eqref{PS} for some $p$ with $1<p<\infty$. 
Such a function will be called an {\em extremal} of the inequality.

Extremals of \eqref{PS}
were first analyzed by Brothers and Ziemer in 1988~\cite{BZ}.
Clearly, every translate of a symmetric decreasing function 
is an extremal. In the converse direction,
the level sets of extremals must
be balls, but they need not be concentric.  For example, a 
function whose graph consists of a small cone stacked
on the frustrum of a large cone is an extremal, regardless of 
the precise position of the smaller cone on the plateau.
Brothers and Ziemer discovered that a
similar phenomenon can occur even for functions without plateaus.
Under the assumption that the distribution function
of $u$ is absolutely continuous, they proved that the only extremals 
are translates of $u^\star$.  Otherwise, there exist 
extremals that are equimeasurable to,
but not translates of~$u^\star$. 

The condition that the distribution function
be absolutely continuous is equivalent to requiring
that the set of non-trivial critical points of $u^\star$
has measure zero.  What can be said about extremals
where this set has positive measure?
In 2006, Cianchi and Fusco proved that every
extremal of \eqref{PS} whose support has finite measure 
satisfies
\begin{equation}
\label{eq:CF}
\|u-u^\star\circ\tau\|_1 \le 
L_n ||\nabla u||_p \cdot
\lambda_n(\supp u)^{\frac{1}{p'}+\frac{2n-1}{2n^2}}
\,\lambda_n(\C)^{\frac{1}{2n^2}}
\end{equation}
for a suitable translation $\tau$~\cite[Theorem 1.1]{CiFu2}.
Here, $L_n$ is a constant that depends 
on the dimension,  
$\lambda_n$ is the $n$-dimensional Lebesgue measure, 
$p'=p/(p\!-\!1)$ is the H\"older dual exponent of $p$, 
and $\C$ is the set of critical points defined
in Theorem~\ref{main} below.
Our goal is to simplify the 
analysis and construct explicit bounds for extremals whose 
support need not have finite measure.

The results of Brothers-Ziemer and Cianchi-Fusco apply 
to certain convex Dirichlet-type functionals that will be 
described below, and to the more general functionals 
treated in~\cite{CiFe}, which need not be in integral form.  
They remain valid --- after adjusting the constants ---
for the convex rearrangement, which replaces
the level sets of $u$ by suitably scaled copies
of a {centrally symmetric} convex body $B\subset\RR^n$.
For the sake of simplicity, we 
focus on the classical case of the $L^p$-norm of the gradient 
with $1<p<\infty$, leaving the discussion of more
general functionals for the last section of the paper.

\begin{Theorem}\label{main} Let $u$ be a nonnegative function on $\RR^n$ 
that vanishes at infinity and whose distributional gradient
lies in $L^p$ for some $1<p<\infty$,
and let $u^\star$ be its symmetric decreasing rearrangement.
If
$$
\| \nabla u \|_p =\| \nabla u^\star \|_p\,,
$$
then there exists a translation $\tau$ such that
\begin{equation}
\label{eq:main}
\| u-u^\star\circ \tau\|_q\le K_n^{1/q} \, ||u||_q^{1/n'}\,
||u\Chi_\C||_q^{1/n}\,
\end{equation}
for every $q\ge 1$ with $u\in L^q$.  Here, $K_n=2\omega_{n-1}/\omega_n$, and
\begin{equation*}
\C=  \bigl\{x\in \RR^n : 
0< u(x)<\esup u\>\> \mbox{and}\>\> |\nabla u(x)| = 0\bigr\}\,.
\end{equation*}
\end{Theorem}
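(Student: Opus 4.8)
The plan is to upgrade the equality $\|\nabla u\|_p=\|\nabla u^\star\|_p$ into a rigid description of the superlevel sets of $u$, and then to estimate $\|u-u^\star\circ\tau\|_q$ directly from the layer--cake formula. By the coarea formula, $\|\nabla u\|_p^p=\int_0^\infty\!\bigl(\int_{\{u=t\}}|\nabla u|^{p-1}\,d\mathcal H^{n-1}\bigr)dt$; on each level set, Hölder's inequality with exponents $p,p'$ gives $\Per\{u>t\}=\mathcal H^{n-1}(\{u=t\})\le\bigl(\int_{\{u=t\}}|\nabla u|^{p-1}\bigr)^{1/p}\bigl(\int_{\{u=t\}}|\nabla u|^{-1}\bigr)^{1/p'}$, and since $\int_{\{u=t\}}|\nabla u|^{-1}\,d\mathcal H^{n-1}=-\mu'(t)$ for a.e.\ $t$ (with $\mu(t)=\lambda_n\{u>t\}$), the isoperimetric inequality yields $\|\nabla u\|_p^p\ge\int_0^\infty(n\omega_n^{1/n})^p\,\mu(t)^{p(n-1)/n}\,(-\mu'(t))^{1-p}\,dt$. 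The right--hand side equals $\|\nabla u^\star\|_p^p$, because for $u^\star$ the level sets are balls (equality in the isoperimetric step) and $|\nabla u^\star|$ is constant on each level sphere (equality in the Hölder step), while $u$ and $u^\star$ share the same $\mu$. Hence the hypothesis forces equality for a.e.\ $t$: up to a null set $\{u>t\}=B(c(t),r(t))$ with $r(t)=(\mu(t)/\omega_n)^{1/n}$, and $|\nabla u|$ is $\mathcal H^{n-1}$--a.e.\ constant on $\{u=t\}$. Nesting of superlevel sets makes $c,r$ well defined on all of $(0,M)$, $M=\esup u$, with $r$ nonincreasing and $|c(t)-c(s)|\le r(t)-r(s)$ for $s>t$.

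The key structural input is then the \emph{rigidity of the centres}: $c(\cdot)$ has locally bounded variation, is constant on every interval of levels $I$ with $\lambda_n(C\cap\{u\in I\})=0$, and more precisely $r(t)^{n-1}\,|Dc|\le\tfrac1{n\omega_n}\,d\gamma$ as measures on $(0,M)$, where $\gamma=u_{\#}(\lambda_n\llcorner C)$ is the image of $\lambda_n\llcorner C$ under $u$. In particular $\int_0^M r(t)^{n-1}\,|Dc|\le\lambda_n(C)/(n\omega_n)$, while $\int_0^M t^q\,d\gamma=\|u\Chi_\C\|_q^q$ and $q\omega_n\int_0^M t^{q-1}r(t)^n\,dt=\|u\|_q^q$. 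The substance of the rigidity statement is that wherever $|\nabla u|$ is constant and bounded away from $0$, $u$ must locally coincide with a radial function, so $c$ cannot move there; this is obtained by feeding the spherical structure of $\{u=t\}$ and the constancy of $|\nabla u|$ on it into the coarea formula, and it is exactly the mechanism of the Brothers--Ziemer theorem. The measure estimate for $|Dc|$ then follows because $c$ varies only where $-dr$ (equivalently $-d\mu$) is singular, the singular part of $-d\mu$ is dominated by $\gamma$ (plateaux give the atoms $\lambda_n\{u=t_0\}=\gamma(\{t_0\})$, and a Sard--type bound handles the rest), and the elementary inequality $a^n-b^n\ge nb^{n-1}(a-b)$ turns a jump of $r$ into one of $\mu$.

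Finally, from $u-u^\star\circ\tau=\int_0^M\bigl(\chi_{\{u>t\}}-\chi_{\{u^\star\circ\tau>t\}}\bigr)dt$ and $\{u>t\}\triangle\{u^\star\circ\tau>t\}=B(c(t),r(t))\triangle B(\tau,r(t))$, whose measure is at most $2\omega_{n-1}r(t)^{n-1}|c(t)-\tau|$ and also at most $2\mu(t)$, Minkowski's integral inequality gives $\|u-u^\star\circ\tau\|_q\le\int_0^M\min\bigl(2\omega_{n-1}r(t)^{n-1}|c(t)-\tau|,\,2\mu(t)\bigr)^{1/q}dt$. I would take $\tau=c(t_\star)$ with $t_\star$ a level that splits the $\gamma$--mass into halves, and use $|c(t)-c(t_\star)|\le\int r(s)^{n-1}|Dc|\,/\,r(\cdot)^{n-1}$, the monotonicity of $r$ serving to place the right power of $r$ on the correct side of $t_\star$; the $t$--integral then factors into one piece governed by $\int_0^M r(t)^{n-1}|Dc|\le\lambda_n(C)/(n\omega_n)$ and one governed by $\int_0^M t^{q-1}r(t)^n\,dt=\|u\|_q^q/(q\omega_n)$, and a single application of Hölder in $t$ collapses these into $K_n^{1/q}\,\|u\|_q^{1/n'}\,\|u\Chi_\C\|_q^{1/n}$ with $K_n=2\omega_{n-1}/\omega_n$.

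I expect two points to carry the real difficulty. The first is the centre--rigidity lemma (that $u$ is locally radial off the critical set): this is the genuine ``regularity'' content, inherited from the sharp form of the Brothers--Ziemer analysis, and it is where the fine properties of Sobolev functions enter. The second is the calibration of $\tau$ together with the bookkeeping needed to land on the clean constant $K_n$ rather than a dimensional multiple of it, including the degenerate case $\lambda_n(\supp u)=\infty$ where $r(0^+)=\infty$; there one checks that $\int_0^{t_\star}r^{n-1}|Dc|$ remains finite because $|Dc|$ is controlled by $\gamma$, a measure of finite relevant mass unless $\|u\Chi_\C\|_q=\infty$ and the inequality is vacuous.
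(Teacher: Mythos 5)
Your geometric setup---invoking the Brothers--Ziemer characterization to get nested balls $\{u>t\}=B(c(t),r(t))$, and then controlling the variation of the centre $c$ by the singular part of the distribution function---is essentially the same route the paper takes. But the final analytic step, as you have written it, does not close, and the difficulty is not just bookkeeping.

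The problem is Minkowski's integral inequality. From $u-u^\star\circ\tau=\int_0^M(\chi_{\{u>t\}}-\chi_{\{u^\star\circ\tau>t\}})\,dt$, Minkowski gives
$\|u-u^\star\circ\tau\|_q\le\int_0^M\lambda_n\bigl(\{u>t\}\triangle\{u^\star\circ\tau>t\}\bigr)^{1/q}\,dt$,
i.e.\ the symmetric difference enters to the power $1/q$ \emph{inside} the level integral. After inserting $\lambda_n(\{u>t\}\triangle\{u^\star\circ\tau>t\})\le K_n\,F(t)^{1/n'}F^s(t)^{1/n}$, you are asked to prove
$\int_0^M F(t)^{1/(qn')}F^s(t)^{1/(qn)}\,dt\le\bigl(q\!\int F t^{q-1}\bigr)^{1/(qn')}\bigl(q\!\int F^s t^{q-1}\bigr)^{1/(qn)}$.
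A three-term H\"older with the natural exponents $qn'$, $qn$, $q'$ forces a third factor $\bigl(\int_0^M t^{-1}\,dt\bigr)^{1/q'}$, which diverges. No rearrangement of the exponents avoids this: the sum $1/(qn')+1/(qn)=1/q<1$ means Minkowski has ``spent'' a power of $1/q$ too early, and it cannot be recovered. This is the genuine gap. The paper sidesteps it with a sharper layer-cake inequality (Lemma~\ref{lem:Psi-2}), which bounds $\|u-v\|_q^q$ --- not $\|u-v\|_q$ --- by $\int_0^\infty\lambda_n(\{u>t\}\triangle\{v>t\})\,qt^{q-1}\,dt$. With the symmetric difference appearing to the \emph{first} power against the weight $qt^{q-1}\,dt$, a single two-term H\"older with exponents $n'$ and $n$ factors the integral into exactly $\|u\|_q^{q/n'}\,\|u\Chi_\C\|_q^{q/n}$. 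That lemma (proved from the convexity identity in Lemma~\ref{lem:Psi-1}) is the analytic ingredient your outline is missing; without it the estimate cannot be closed at the stated exponents.

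A secondary, related issue is the form of your centre-rigidity bound. The measure inequality $r(t)^{n-1}\,|Dc|\le(n\omega_n)^{-1}\,d\gamma$ is correct for the jump part, but it is weak precisely where $r$ is small: integrating $|Dc|$ over levels near $\esup u$ forces a division by $r^{n-1}$, which you then have to juggle by a careful choice of $t_\star$. The paper instead proves the pointwise bound $|\xi_s-\xi_t|\le\bigl(\mu^s((s,t])/\omega_n\bigr)^{1/n}$ by exploiting concavity of $s\mapsto s^{1/n}$ (all the jump intervals $(F(t_-),F(t))$ are disjoint, so the sum of $n$-th roots is bounded by the $n$-th root of the sum); this bound is uniform and remains nontrivial as $r\to0$, so the translation can simply be taken to align the tops of the graphs, $\tau(x)=x-\xi_\infty$, with no calibration needed. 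Combined with the elementary symmetric-difference estimate \eqref{eq:symmdiff} for translated balls, this delivers precisely $K_n=2\omega_{n-1}/\omega_n$.
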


\smallskip
The translation $\tau$ is chosen to align the graphs of
$u^\star\circ \tau$ {and} $u$ at the top.
The value of the constant is given by
$K_n=\bigl(\int_0^{\pi/2} \cos^n\theta\, d\theta\bigr)^{-1}
\sim (2n/\pi)^{1/2}$.
The set $\C$ consists of the non-trivial critical points of $u$,
except for the possible plateau at height $\esup u$. 
For the conclusion of the theorem, we can equivalently 
replace the function $u$ by $u^\star$ on the right
hand side of \eqref{eq:main} and in the definition of $\C$. 
Indeed, if $u\in W^{1,1}_{loc}$ and $\S$ is the set 
where the singular part of the 
distribution function is concentrated, then 
$\C\supset u^{-1}(\S)$ in general, and equality holds 
if $u$ is an extremal.

If $\C$ has finite measure,
there is a simpler estimate in terms of
its volume radius.

\begin{Theorem}\label{finite} 
Under the assumptions of Theorem~\ref{main}, if 
$\Ln(\C)<\infty$ then there exists a translation $\tau$ such that
\begin{equation}\label{eq:finite}
|| u-u^\star\circ\tau ||_p\le 
||\nabla u||_p\cdot \left(\frac{\Ln(\C)}{\omega_n}\right)^{\frac1n}.
\end{equation}
\end{Theorem}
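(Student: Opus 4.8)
The plan is to use the rigidity of extremals to pass to the radial profile of $u^\star$, and then to convert the displacement of the level sets of $u$ into the bound~\eqref{eq:finite} by a fundamental-theorem-of-calculus argument along the displacement directions.

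\smallskip
\emph{Step 1 (structure of the extremal).} Equality in~\eqref{PS} forces, for a.e.\ $t$, the superlevel set $\{u>t\}$ to be a ball $B(x(t),r(t))$ on which $|\nabla u|$ is constant. Writing $u^\star(y)=\psi(|y|)$, so that $r$ and $\psi$ are mutually inverse and $|\nabla u^\star|(y)=|\psi'(|y|)|$, these two facts yield --- this is the Brothers--Ziemer analysis --- that the centre $x(t)$ is constant on every interval of regular values, while across a critical value it can move by at most the decrease of $r$ there, since the superlevel balls are nested. In particular $x(t)\to x_\infty$ as $t\uparrow\esup u$; I choose the translation $\tau$ so that $u^\star\circ\tau(y)=\psi(|y-x_\infty|)$ (the alignment ``at the top''), and set $\Delta:=\sup_t|x(t)-x_\infty|$, the total variation of $t\mapsto x(t)$.

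\smallskip
\emph{Step 2 (the transport estimate).} On a regular spherical shell of $u$ one has $u(y)=\psi(|y-a|)$ with $|a-x_\infty|\le\Delta$, while $u^\star\circ\tau(y)=\psi(|y-x_\infty|)$. Sliding the centre linearly from $a$ to $x_\infty$ gives
\[
|u(y)-u^\star\circ\tau(y)|\le|a-x_\infty|\int_0^1\bigl|\nabla(u^\star\!\circ\tau)\bigr|\bigl(y+s(x_\infty-a)\bigr)\,ds ,
\]
so that, by Jensen's inequality and a translation of variables, $\int_{\mathrm{shell}}|u-u^\star\circ\tau|^p\le|a-x_\infty|^p\int_0^1\!\int_{A_s}|\nabla(u^\star\!\circ\tau)|^p\,ds$, where $A_s$ is the annulus having the same radii as the shell and centre $x_\infty+(1-s)(a-x_\infty)$; the plateau regions of $u$ (on which $u$ is constant and one compares directly with $\psi$) are treated in the same way. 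The decisive observation is that for each fixed $s$ the re-centred pieces $A_s$ are pairwise disjoint and exhaust, up to a null set, the support of $u^\star\circ\tau$: two consecutive shell centres differ by at most the decrease of $r$ across the intervening plateau, which is exactly the thickness of that plateau's annulus, so a partial re-centring can never push one piece out of the hole of the next. Hence $\sum_{\mathrm{pieces}}\int_{A_s}|\nabla(u^\star\circ\tau)|^p=\|\nabla u^\star\|_p^p$ for every $s$; bounding each $|a-x_\infty|$ by $\Delta$ and integrating in $s$ yields $\|u-u^\star\circ\tau\|_p\le\Delta\,\|\nabla u^\star\|_p=\Delta\,\|\nabla u\|_p$.

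\smallskip
\emph{Step 3 (bounding $\Delta$ by the volume radius of $\C$).} Up to a null set $\C$ is the union of the annular level sets of $u$ at its critical values (equivalently, one may take the critical set of $u^\star$, which has the same measure), so $\Ln(\C)/\omega_n=\int_E n\rho^{n-1}\,d\rho$, where $E\subset(0,\infty)$ is the set of radii at which $\psi$ is stationary. The radial intervals composing $E$ are pairwise disjoint, and by Step 1 the total variation of $x(\cdot)$ does not exceed their total length, so $|E|\ge\Delta$. Since $\rho^{n-1}$ is increasing, among subsets of $(0,\infty)$ of a given length the integral $\int n\rho^{n-1}\,d\rho$ is smallest on an interval abutting the origin, whence $\Ln(\C)/\omega_n\ge\int_0^{|E|}n\rho^{n-1}\,d\rho=|E|^{\,n}\ge\Delta^n$. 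Together with Step 2 this is precisely~\eqref{eq:finite}.

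\smallskip
\emph{Where the work is.} Step 3 is elementary and Step 1 is quotable from the literature; the content is the transport estimate of Step 2. The delicate points are, first, that a plateau region is not a genuine annulus --- its inner and outer bounding spheres have different centres --- so one must check that its re-centrings fit disjointly between those of the shells and that the induced comparison of $u$ with $\psi$ still costs only a factor $\Delta$, so that the constant in~\eqref{eq:finite} comes out exactly $1$ rather than a dimensional factor; I expect this bookkeeping to be the main obstacle. Second, when the distribution function of $u$ has a singular continuous part the shells degenerate to a continuum and the sums over shells become integrals against the vector measure $dx$, with $|dx|\le|dr|$ carried by the critical radii; the disjointness statement then says that the $s$-re-centring map is measure-preserving for each $s$, which should again follow from nestedness.
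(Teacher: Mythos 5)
Your overall scheme matches the paper's --- bound the displacement $\Delta=\sup_t|\xi_t-\xi_\infty|$ by the volume radius of $\C$, then convert that displacement into the $L^p$-bound. Your Step~3 is essentially the paper's Lemma~\ref{lem:lip} together with~\eqref{eq:TV-xi} rephrased at the level of the radial profile, and it is fine. The genuine gap is in Step~2, and it is exactly where you anticipate trouble. The shell-by-shell transport argument requires that, for each fixed $s$, the partially re-centred pieces $A_s$ remain pairwise disjoint, and you verify this only for the annuli coming from regular shells. The plateau regions are not annuli --- their inner and outer bounding spheres have different centres $c_1\ne c_2$ --- so the linear-sliding estimate does not literally apply to them, nor is it clear what the ``re-centring'' of such a piece should be or that it stays disjoint from the adjacent shell images; and when the distribution function has a singular continuous part the decomposition into countably many shells and plateaus breaks down altogether. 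As written, the claim $\sum_{\mathrm{pieces}}\int_{A_s}|\nabla(u^\star\circ\tau)|^p=\|\nabla u^\star\|_p^p$ is asserted rather than proved, and proving it looks at least as hard as the theorem itself.

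The paper sidesteps this entirely. It observes that the measure of the difference of two balls of prescribed radii is monotone in the distance between their centres; therefore, in the representation of $\|u-u^\star\circ\tau\|_p^p$ from Lemma~\ref{lem:Psi-1}, replacing each centre $\xi_t$ by the maximally-displaced point $\xi_\infty+\|D\xi\|\,w$ (a \emph{single} direction $w$) can only increase every level-set difference, giving $\|u-u^\star\circ\tau\|_p\le\|u^\star-u^\star\circ\tilde\tau\|_p$ for the single translation $\tilde\tau(x)=x-\|D\xi\|\,w$. The elementary translate estimate $\|f-f(\cdot-h)\|_p\le|h|\,\|\nabla f\|_p$ then yields the bound with constant exactly $1$, and combining with~\eqref{eq:TV-xi} gives~\eqref{eq:finite}. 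This replaces your disjointness bookkeeping with a one-line monotonicity argument, handles plateaus and singular continuous parts uniformly, and is the key idea your proposal is missing. If you want to keep a direct transport argument, you would first need to reduce to a single translate in this way; otherwise you would have to carry out the plateau and singular-continuous bookkeeping in detail, which you have not done.
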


For $1<p<n$, a natural choice for $q$ in Theorem~\ref{main}
is the Sobolev exponent
$p^*=np/(n\!-\!p)$, for which the
right hand side of \eqref{eq:main} is bounded by the
Sobolev inequality. Interpolating 
with \eqref{eq:finite} yields $L^q$-bounds
for $p<q<p^*$, provided that $\C$ has finite measure. 
For $p>n$, there is a corresponding bound in $L^\infty$.

\begin{Theorem}\label{Morrey} 
Under the assumptions of Theorem \ref{main}, if $p>n$
and $\lambda_n(\C)<\infty$ then 
there exists a translation $\tau$ such that
$$
||u-u^\star\circ \tau||_\infty \le 
M_{n,p} \,
\| \nabla u\|_p\cdot 
\left(\frac{\Ln(\C)}{\omega_n}\right)^{\frac1n-\frac1p}\,,
$$ 
where $M_{n,p}$ is the Morrey constant.
\end{Theorem}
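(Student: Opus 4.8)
The plan is to push the $L^p$ estimate of Theorem~\ref{finite} for $v:=u-u^\star\circ\tau$ through Morrey's embedding, which is available precisely because $p>n$. Here $\tau$ is the translation furnished by Theorems~\ref{main} and~\ref{finite}, so that $u$ and $u^\star\circ\tau$ have the same maximum and the same top level set. Since $|\nabla u|\in L^p$ and, by the P\'olya--Szeg\H{o} inequality together with extremality, $\|\nabla u^\star\|_p=\|\nabla u\|_p$, the function $v$ belongs to $W^{1,p}(\RR^n)$, vanishes at infinity, and hence admits a continuous representative.

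The first step is the gradient bound $\|\nabla v\|_p\le\|\nabla u\|_p$. I would obtain it from the structural description of extremals underlying Theorem~\ref{main}: off the critical set, $u$ agrees with the \emph{single} translate $u^\star\circ\tau$, so $\nabla v=0$ a.e.\ on $\{|\nabla u|>0\}$. Consequently, on $\{\nabla v\neq0\}$ one has $\nabla u=0$, hence $|\nabla v|=|\nabla(u^\star\circ\tau)|$ a.e.\ there, and therefore
\[
\|\nabla v\|_p^p=\int_{\{\nabla v\neq0\}}|\nabla(u^\star\circ\tau)|^p\,dx\le\|\nabla(u^\star\circ\tau)\|_p^p=\|\nabla u^\star\|_p^p=\|\nabla u\|_p^p.
\]
(Should this structure be unavailable, the crude bound $\|\nabla v\|_p\le 2\|\nabla u\|_p$ still gives the estimate with $M_{n,p}$ replaced by $2^{n/p}M_{n,p}$.)

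The second step is the scale-invariant Morrey inequality: for $p>n$ and any $w$ with $w,\nabla w\in L^p(\RR^n)$,
\[
\|w\|_\infty\le M_{n,p}\,\|\nabla w\|_p^{\,n/p}\,\|w\|_p^{\,1-n/p}.
\]
This follows by writing $|w(x)|\le|w(x)-(w)_{B_r(x)}|+|(w)_{B_r(x)}|$ for the mean value $(w)_{B_r(x)}$ over the ball $B_r(x)$, bounding the first term via the pointwise Morrey estimate and the second via H\"older's inequality, and optimizing over $r>0$; the sharp constant is pinned down by replacing $w$ with its symmetric decreasing rearrangement (which leaves $\|w\|_\infty$ and $\|w\|_p$ unchanged and, again by P\'olya--Szeg\H{o}, does not increase $\|\nabla w\|_p$) and then solving a one-dimensional variational problem, whose value is the Morrey constant $M_{n,p}$.

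Combining the two steps with Theorem~\ref{finite}, which gives $\|v\|_p\le\|\nabla u\|_p(\Ln(\C)/\omega_n)^{1/n}$, then yields
\[
\|v\|_\infty\le M_{n,p}\,\|\nabla u\|_p^{\,n/p}\Bigl(\|\nabla u\|_p\bigl(\tfrac{\Ln(\C)}{\omega_n}\bigr)^{1/n}\Bigr)^{1-n/p}=M_{n,p}\,\|\nabla u\|_p\Bigl(\tfrac{\Ln(\C)}{\omega_n}\Bigr)^{\frac1n-\frac1p},
\]
which is the assertion. I expect the main obstacle to be the gradient bound of the first step: it rests on the rigidity of the P\'olya--Szeg\H{o} inequality and on the fact that the translations appearing in Theorems~\ref{main} and~\ref{finite} and in the alignment of the top level sets can be taken to coincide. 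The exact identification of the constant in the scale-invariant Morrey inequality is a secondary, routine point that I would settle separately.
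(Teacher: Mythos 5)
The proposal has a genuine gap. Your first step asserts that ``off the critical set, $u$ agrees with the \emph{single} translate $u^\star\circ\tau$, so $\nabla v=0$ a.e.\ on $\{|\nabla u|>0\}$.'' This is false, and indeed the whole point of the theorem is that it fails: the center $\xi_t$ of the level set $\{u>t\}$ can move as $t$ crosses singular values, so off the critical set $u$ agrees locally with \emph{some} translate of $u^\star$, but that translate varies from one band of regular values to the next. In the Brothers--Ziemer example of a small cone stacked off-center on the frustrum of a large cone, the critical set is the plateau; below the plateau $u$ coincides with one translate of $u^\star$ and above it with a different one, while $\tau$ is chosen to align only the top. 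Hence $\nabla v$ does not vanish on $\{|\nabla u|>0\}$, and your computation of $\|\nabla v\|_p$ collapses. Your own fallback ($\|\nabla v\|_p\le 2\|\nabla u\|_p$ by the triangle inequality) then gives only $2^{n/p}M_{n,p}$, not the stated constant; and, moreover, the constant in the scale-invariant $L^\infty$ Morrey inequality you invoke is not a priori the same ``Morrey constant'' as the one in the pointwise H\"older estimate $|w(y)-w(z)|\le M_{n,p}\|\nabla w\|_p\,|y-z|^{1-n/p}$ that the theorem intends.

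The paper's argument avoids all of this by applying the \emph{pointwise} Morrey estimate directly to $u^\star$, not to $v=u-u^\star\circ\tau$. Writing $u(x)=u^\star(x-\xi_{u(x)})$ and $(u^\star\circ\tau)(x)=u^\star(x-\xi_\infty)$, one gets
\[
|u(x)-(u^\star\circ\tau)(x)|\le M_{n,p}\,\|\nabla u^\star\|_p\,|\xi_{u(x)}-\xi_\infty|^{1-\frac{n}{p}},
\]
and since $\|\nabla u^\star\|_p=\|\nabla u\|_p$ and $|\xi_{u(x)}-\xi_\infty|\le\|D\xi\|\le(\lambda_n(\C)/\omega_n)^{1/n}$ by \eqref{eq:TV-xi}, the claim follows with exactly the Morrey constant. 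This sidesteps any estimate on $\nabla v$ and never needs $\|v\|_p$, so Theorem~\ref{finite} is not used. If you want to salvage your interpolation route, you would have to replace the false identity $\nabla v=0$ on $\{|\nabla u|>0\}$ by a correct bound on $\|\nabla v\|_p$, and you would still face the constant mismatch; the paper's direct approach is both simpler and sharp.
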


\smallskip 
We briefly describe the relation
with the literature.
Theorem~\ref{main} contains the result of Brothers and Ziemer,
because the right hand side of \eqref{eq:main}
vanishes when the distribution function of $u$ is absolutely continuous.
Similarly, Theorem~\ref{finite} contains the bound of Cianchi and 
Fusco. To see this, apply
H\"older's inequality on the left hand side of 
\eqref{eq:finite} and use that
$\C\subset \supp u$ on the right to obtain \eqref{eq:CF} with
$L_n=2^{1/p'}\omega_n^{-1/n}$.
We will show below that the proof of Theorem~\ref{main}
also implies~\eqref{eq:CF}.
However, Theorems~\ref{finite} and~\ref{Morrey} do
not seem to follow directly from Theorem~\ref{main}.

{
\medskip\noindent{\em Acknowledgments.}
Research for this paper was supported in part
by an NSERC Discovery Grant and a GNAMPA Project.
%
}

\section{Outline of the proof}  
Brothers and Ziemer characterized extremals as follows.
If $u$ satisfies the assumptions of Theorem~\ref{main}, 
then its level sets are balls,
\begin{equation}\label{eq:balls}
\{u>t\}= \xi_t + \{u^\star>t\}
\end{equation} 
(up to sets of Lebesgue measure zero). Furthermore,
the gradient is equidistributed on level sets,
\begin{equation}\label{eq:grad}
|\nabla u(x)|_{\rfloor \partial \{u>t\}} 
= |\nabla u^\star |_{\rfloor \partial \{u^\star>t\}}
\end{equation}
for $\HH^{n-1}$-almost every $x\in\partial \{u>t\}$
and almost every $t\in (0,\esup u)$.
This equidistribution property is a consequence of the strict 
convexity of the function $t\to t^p$. All later work 
on the problem relies on this characterization.

A more delicate issue is to prove that the level 
sets are {\em concentric} balls
if the distribution function of $u$ is absolutely 
continuous. Brothers and Ziemer, starting from 
\eqref{eq:balls}, express $u^\star$ in terms of $u$ as 
$u^\star=u\circ T$ and study the regularity of the transformation 
$T$ under the assumption of the {\em continuity} of the 
distribution function. The crucial point is the 
evaluation of $\nabla u^\star$, which
requires a non-standard chain rule 
because $u$ is just a Sobolev function.
The {\em absolute
continuity} of the distribution function is needed to 
deduce that $T$ is a translation of the identity map 
from \eqref{eq:grad} and the fact that
$\nabla u^\star (x)=(DT(x))^t \,\nabla u (T(x))$.

In the last ten years, several new proofs of these
results have appeared.
In~\cite{FV1}, the authors reverse the approach of Brothers 
and Ziemer and express
$u$ in terms of $u^\star$ as $u(x)= u^\star (T(x))$.  
That leads to an easier case
of the chain rule, because $u^\star$ is essentially
a function of a single variable.
Finally, the conclusion is obtained by a gradient-flow argument. 
Since this last part of the proof relies
on the uniform convexity and smoothness properties
of the Euclidean norm, the authors later
developed yet another geometric argument to treat
rearrangements with respect to arbitrary 
norms in $\RR^n$~\cite{FV2}. 
Their method was subsequently used by
Cianchi and Fusco in~\cite{CiFu2}.

The argument in ~\cite{FV2} proceeds as follows.
Let $\xi_t$ denote the center of the ball $\{u>t\}$,
and let $R(x)$ be the function that assigns
to each point  $x\in\RR^n$ the radius
of the ball $\{u>u(x)\}$. The key
observation is that for all $s,t\in (0,\esup u)$
there exists a pair of points
$x\in\partial\{u>s\}$ and $y\in\partial\{u>t\}$
such that
\begin{equation} \label{eq:xi-Phi}
|\xi_s-\xi_t| = 
|R(x)-R(y)| -|x-y|\,,
\end{equation}
see Fig.~\ref{fig:key}. 
If the distribution function of $u$ is absolutely 
continuous, then 
$R$ is Lipschitz continuous and
$|\nabla R|\equiv 1$.
It follows that $\xi_t$ is constant, proving that
$u$ is a translate of $u^\star$.

\begin{figure} [htbp]
\centering
\includegraphics{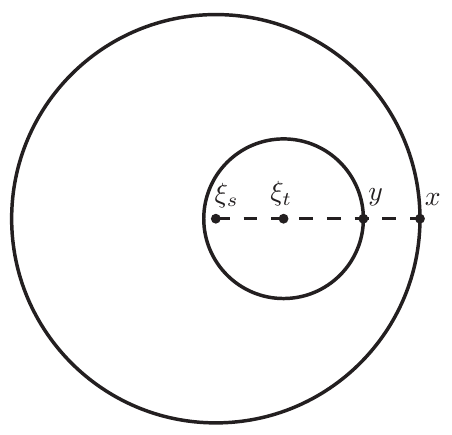}
\caption{\small Two circles ordered by inclusion.
The difference between the radii can be expressed 
as the sum of  their distance $|x-y|$ and the distance
of the centers $|\xi_s-\xi_t|$, see Eq.~\eqref{eq:xi-Phi}.}
\label{fig:key}
\end{figure}

If the distribution function of $u$ is not absolutely
continuous, then $R$ is of bounded variation. 
In~\cite{CiFu2}, the 
distribution function of $u$ is approximated by
an absolutely continuous function. Instead, we 
approximate $u$ by functions whose distribution functions 
have jumps, but no singular continuous part. In Section~\ref{sec:balls}, 
we analyze the variation of $R$ 
for such functions and derive the bound
\begin{equation}
\label{eq:var-xi}
|\xi_s-\xi_t|\le \left( \frac{1}{\omega_n}\mu^s ((s,t])\right)^{\frac1n}\,,
\end{equation}
where $\mu^s$ is the singular part of the measure 
associated with the distribution function of $u$.
It follows that the total variation of $\xi$ is
bounded by the volume radius of $\C$,
\begin{equation}
\label{eq:TV-xi}
||D\xi|| \le \left(\frac{\lambda_n(\C)}{\omega_n}\right)^{\frac1n}\,.
\end{equation}
In Section~\ref{sec:main}, we show that this
implies the main results. 
The final Section~\ref{sec:Dirichlet}
is dedicated to convex Dirichlet functionals.

Before turning to the technical part,
observe that the characterization
of extremals given by Brothers and Ziemer
does not depend on the value of $p$.
If $u$ is an extremal for {\em some} $1<p< \infty$,
then \eqref{eq:balls}-\eqref{eq:grad} imply by
the coarea formula that $u$ produces equality
in \eqref{PS} for {\em every} $1\le p\le \infty$.
In fact, much more is true. According 
to~\cite[Theorem 1.7]{CiFe}, there is a wide class of 
functionals satisfying a suitable strict monotonicity condition
that have the same family of extremals, which all satisfy 
the conclusions of Theorem~\ref{main}-\ref{Morrey} 
(as well as those of Corollaries~\ref{cor:main}-\ref{cor:finite} 
with $V=\emptyset$).

\section{Notation and preliminary results}
\label{sec:def}

We work on $\RR^n$, equipped with the 
standard Euclidean norm $|\cdot|$ and 
Lebesgue measure $\lambda_n$.
Let $u$ be a nonnegative measurable function on~$\RR^n$.
We always assume that $u$  {\it vanishes at infinity}, in the
sense that its level sets $\{x\in\RR^n: u(x)>t\}$ have finite
measure for all $t>0$. Its {\it distribution function} 
is given by
\begin{equation*}
F(t)=\Ln  (\{x\in\RR^n : |u(x)|>t\})\,, \qquad t>0\,.
\end{equation*}
The {\it symmetric decreasing rearrangement} of $u$ is defined by
\begin{equation}
\label{eq:def-ustar}
u^\star (x)=\sup\{t>0: F(t)>\omega_n|x|^n\}\qquad x\in\RR^n\,;
\end{equation}
it is the unique radially decreasing
function that is equimeasurable to $u$ and lower semicontinuous.

The following construction removes
a collection of horizontal slices from the graphs of $u$ and $u^\star$ 
(see Fig.~\ref{fig:approx}).
Given a finite or countable union of intervals
$I\subset \RR_+$,
set 
\begin{equation}
\label{eq:slice}
f(t)=\lambda_1([0,t]\setminus I)\,.
\end{equation}
Then $f\circ u$ vanishes at infinity, and
$(f\circ u)^\star = f \circ u^\star$.  If $u\in \W11$,
then $f\circ u\in W^{1,1}_{\rm loc}$, and
$$
\nabla (f\circ u)(x) = \Chi_{\{u(x)\not \in I\}} \nabla u(x)
$$
almost everywhere on $\RR^n$ (see~\cite[Corollary 6.18]{LL}).

\begin{figure}[htbp]
\centering%
\includegraphics{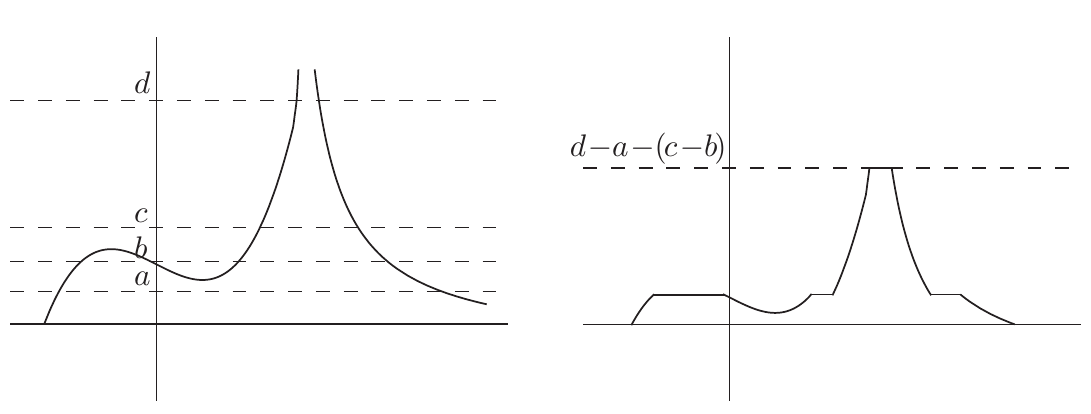}
\caption{\small Removing a horizontal slice from the
graph of $u$, see Eq.~\eqref{eq:slice}.
The right panel shows
$f\circ u$ with $I=[0,a)\cup(b,c)\cup (d,\infty)$.
}
\label{fig:approx}
\end{figure}

Many useful quantities can be expressed in terms of distribution
functions. For any absolutely continuous
function $\Psi$ on $\RR_+$ with $\Psi(0)=0$, 
there is the {\em layer-cake representation}
$$
\int \Psi(u)\, dx = \int_0^\infty F(t)\,\Psi'(t)\, dt\,.
$$
We note for later use that
\begin{equation}
\label{eq:L1-dist}
||u-v||_1= \int_0^\infty \lambda_n(\{u>t\} \bigtriangleup
\{v>t\})\, dt\,
\end{equation}
for any pair of nonnegative integrable functions 
(here {$\bigtriangleup$}
stands for the symmetric difference of sets).
The following two lemmas provide similar formulas
for other convex functions of $|u-v|$.

\begin{lemma}
\label{lem:Psi-1}
Let $\Psi$ be a convex function on $\RR_+$
with $\Psi(0)=\Psi'(0)=0$, let $\nu$ be the measure
that represents its second distributional derivative,
and let $u,v$ be nonnegative measurable functions. Then
\begin{align}
\notag \int \Psi(|u-v|)\, dx
& =\int_0^\infty\!\!\int_0^t
\bigl[
\lambda_n(\{u>t\}\setminus \{v>t\!-\!s\})\\
\label{eq:Psi-dist-1}
& \qquad\qquad +\lambda_n(\{v>t\}\setminus\{u>t-s\})\bigr]
\, d\nu(s) dt\,.
\end{align}
\end{lemma}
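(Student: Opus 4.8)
The plan is to reduce the identity to the one-dimensional layer-cake formula~\eqref{eq:L1-dist} by writing $\Psi$ as a superposition of ``ramp'' functions and applying Fubini. Since $\Psi$ is convex on $\RR_+$ with $\Psi(0)=\Psi'(0)=0$, the fundamental theorem of calculus gives $\Psi'(r)=\nu([0,r))$ and hence
\begin{equation*}
\Psi(r)=\int_0^r \Psi'(\rho)\,d\rho
=\int_0^r\!\!\int_{[0,\rho)} d\nu(s)\,d\rho
=\int_{[0,\infty)} (r-s)_+\, d\nu(s)\,,
\end{equation*}
where $(r-s)_+=\max\{r-s,0\}$ and the last step is Fubini on $\{0\le s<\rho<r\}$. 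Applying this with $r=|u(x)-v(x)|$ and integrating over $\RR^n$, another use of Fubini (all integrands are nonnegative, so Tonelli applies without integrability hypotheses) yields
\begin{equation*}
\int \Psi(|u-v|)\, dx
=\int_{[0,\infty)}\!\Bigl(\int_{\RR^n} (|u(x)-v(x)|-s)_+\, dx\Bigr)\, d\nu(s)\,.
\end{equation*}

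The heart of the argument is then the inner identity, for each fixed $s\ge 0$:
\begin{equation}
\label{eq:inner-ramp}
\int_{\RR^n} (|u-v|-s)_+\, dx
=\int_0^\infty\!\bigl[\lambda_n(\{u>t\}\setminus\{v>t-s\})
+\lambda_n(\{v>t\}\setminus\{u>t-s\})\bigr]\, dt\,.
\end{equation}
To see this, note that at a point $x$ the integrand on the left equals $(|u(x)-v(x)|-s)_+$, which I claim equals $\lambda_1\bigl(\{t>0: \text{exactly one of } u(x)>t,\ v(x)>t-s \text{ holds, or the analogous swapped pair}\}\bigr)$; concretely, if $u(x)\ge v(x)$ then the set of $t$ with $u(x)>t\ge v(x)+s$ (equivalently $v(x)>t-s$ fails while $u(x)>t$ holds, with $t$ also above $s$) has length exactly $(u(x)-v(x)-s)_+$, and the roles reverse when $v(x)>u(x)$. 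Writing this pointwise length as $\int_0^\infty \Chi[\,\cdots\,]\,dt$ and interchanging the $x$- and $t$-integrals by Tonelli gives precisely the right-hand side of~\eqref{eq:inner-ramp}, once one checks that for each $t$ the set of $x$ contributing to the first bracket is exactly $\{u>t\}\setminus\{v>t-s\}$ (and symmetrically for the second). Substituting~\eqref{eq:inner-ramp} into the previous display and restricting the $s$-integration to $[0,t]$ — legitimate since for $s\ge t$ one has $\{v>t-s\}=\RR^n$ and $\{u>t\}\setminus\RR^n=\emptyset$, so those terms vanish — produces exactly~\eqref{eq:Psi-dist-1}.

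The main obstacle is the bookkeeping in~\eqref{eq:inner-ramp}: one must handle the two cases $u(x)\ge v(x)$ and $v(x)>u(x)$ carefully and verify that the ``exactly one holds'' region in the $(x,t)$ plane decomposes cleanly into the two set-difference contributions, with no double counting on the overlap and no missing mass from negative values of $t-s$. A secondary technical point is that $\nu$ may have an atom at $0$, but $\Psi'(0)=0$ forces $\nu(\{0\})=0$, so the formula $\Psi'(r)=\nu([0,r))$ is consistent and the representation $\Psi(r)=\int (r-s)_+\,d\nu(s)$ holds without a correction term; the integrability needed to legitimize the Fubini steps is automatic because every integrand is nonnegative. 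I expect the measure-theoretic interchanges to be routine once the pointwise identity for the ramp function is pinned down.
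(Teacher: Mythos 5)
Your proof is correct, and it is essentially the same argument as the paper's: both hinge on a Fubini--Tonelli interchange over the product $\nu\times\lambda_1\times\lambda_n$, with the paper writing $\Psi(b-a)=(\nu\times\lambda_1)(\{(s,t):a+s<t<b\})$ and integrating directly, while you pull the $\nu$-integral out first via the ramp decomposition $\Psi(r)=\int (r-s)_+\,d\nu(s)$ and then establish the layer-cake identity for each fixed $s$ --- a cosmetic re-ordering of the same interchanges. Your handling of the endpoint $s=t$ is also fine, since $\{s=t\}$ is $(\nu\times\lambda_1)$-null.
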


\begin{proof} We use that
\begin{align*}
\Psi(b-a) = \int_a^b \!\!\int_0^{t-a}\, d\nu(s)dt 
= (\nu\!\times\!\lambda_1)
\bigl(\{(s,t): a+s<t<b\}\bigr)\,
\end{align*}
for $b>a>0$,  and split the integral according to the sign of $u-v$,
$$
\int \Psi(|u-v|) \, dx = \int_{\{v>u\}} \Psi(v-u)\, dx
+\int_{\{u>v\}} \Psi(u-v)\, dx \,.
$$
For the first integral on the right, Fubini's theorem
gives
\begin{align*}
\int_{\{v>u\}} \Psi(v-u)\, dx
&= (\nu\!\times\! \lambda_1\!\times\!\lambda_n)
\bigl(\{
(s,t,x): u(x)+s <t<v(x)\}\bigr)\\
&=\int_0^\infty\!\!\int_0^t \lambda_n\bigl(\{v>t\}\setminus \{u>t-s\}\bigr)\,
d\nu(s)dt\,.
\end{align*}
Treating the second integral in the same way, we 
arrive at the claimed identity.
\end{proof}

\begin{lemma}
\label{lem:Psi-2}
Let $\Psi$ be a convex function on $\RR_+$
with $\Psi(0)=0$,
and let $u,v$ be nonnegative measurable functions. Then
$$
\int \Psi(|u-v|)\, dx
\le \int_0^\infty \lambda_n(\{u>t\} \bigtriangleup
\{v>t\})\, \Psi'(t)\, dt\,.
$$
\end{lemma}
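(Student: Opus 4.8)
The plan is to derive the inequality from an elementary pointwise estimate and then integrate. Fix $x\in\RR^n$, set $a=u(x)$, $b=v(x)$, and assume $a\ge b\ge 0$ (the roles of $u,v$ being symmetric). The point $x$ belongs to $\{u>t\}\bigtriangleup\{v>t\}$ precisely for $t\in[b,a)$, so at this $x$
\begin{equation*}
\int_0^\infty \Chi_{\{u>t\}\bigtriangleup\{v>t\}}(x)\,\Psi'(t)\,dt=\int_b^a\Psi'(t)\,dt=\Psi(a)-\Psi(b)\,,
\end{equation*}
the last step being the fundamental theorem of calculus for the convex, hence locally absolutely continuous, function $\Psi$ together with $\Psi(0)=0$. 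It therefore suffices to prove the pointwise bound $\Psi(a-b)\le\Psi(a)-\Psi(b)$ for all $a\ge b\ge 0$, that is, the superadditivity of $\Psi$ on $\RR_+$.

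Superadditivity follows from convexity and $\Psi(0)=0$ alone: for $s\ge 0$ and $\lambda\in[0,1]$,
\begin{equation*}
\Psi(\lambda s)=\Psi\bigl(\lambda s+(1-\lambda)\cdot 0\bigr)\le\lambda\,\Psi(s)\,,
\end{equation*}
and applying this with $s=a$, $\lambda=\tfrac{a-b}{a}$, and then with $\lambda=\tfrac{b}{a}$ (when $a>0$; for $a=0$ one has $b=0$ and the bound is trivial), and adding, yields $\Psi(a-b)+\Psi(b)\le\Psi(a)$.

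Combining the two steps gives, for every $x$,
\begin{equation*}
\Psi(|u(x)-v(x)|)\le\int_0^\infty \Chi_{\{u>t\}\bigtriangleup\{v>t\}}(x)\,\Psi'(t)\,dt\,.
\end{equation*}
Integrating over $\RR^n$ and interchanging the order of integration then produces the right-hand side $\int_0^\infty\lambda_n\bigl(\{u>t\}\bigtriangleup\{v>t\}\bigr)\,\Psi'(t)\,dt$, completing the proof. I do not foresee a real obstacle: the argument is a layer-cake computation once the two elementary facts above are in hand, and the only step that needs a word is the interchange of integrals, which is Tonelli's theorem since the integrand is nonnegative when $\Psi'\ge 0$ (the case relevant to the applications; otherwise one first removes the linear part of $\Psi$, which by~\eqref{eq:L1-dist} contributes the same amount to both sides). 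Note that, unlike in Lemma~\ref{lem:Psi-1}, no hypothesis $\Psi'(0)=0$ is imposed; the trade-off is that the exact identity \eqref{eq:L1-dist} becomes an inequality, with equality recovered when $\Psi$ is linear.
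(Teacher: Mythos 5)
Your proof is correct, and it takes a genuinely different route from the paper's. The paper first subtracts the linear part so that $\Psi'(0_+)=0$, then invokes the exact identity of Lemma~\ref{lem:Psi-1} and observes that replacing $s$ by $0$ in the integrand (i.e., shrinking $\{v>t-s\}$ to $\{v>t\}$) only increases the right-hand side, which then collapses to $\int_0^\infty\lambda_n(\{u>t\}\bigtriangleup\{v>t\})\,\nu\bigl((0,t)\bigr)\,dt$ with $\nu\bigl((0,t)\bigr)=\Psi'(t)$ a.e. Your argument sidesteps Lemma~\ref{lem:Psi-1} entirely: a pointwise layer-cake computation identifies $\int_0^\infty\Chi_{\{u>t\}\bigtriangleup\{v>t\}}(x)\,\Psi'(t)\,dt$ with $\Psi(a)-\Psi(b)$ at $a=\max\{u(x),v(x)\}$, $b=\min\{u(x),v(x)\}$, and the inequality reduces to superadditivity $\Psi(a-b)\le\Psi(a)-\Psi(b)$, which you derive cleanly from convexity and $\Psi(0)=0$. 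This is more elementary and self-contained; the trade-off is that it is tied specifically to this lemma, whereas the paper's detour through Lemma~\ref{lem:Psi-1} pays off because that identity is used again in the proofs of Theorem~\ref{finite} and Corollary~\ref{cor:finite}. Your handling of the Tonelli step (removing the linear part when $\Psi'(0_+)<0$, using~\eqref{eq:L1-dist} to see the linear piece contributes equally) mirrors the paper's opening reduction and is the right fix.
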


\begin{proof} Since the claim holds for {\em linear} 
functions $\Psi$ by \eqref{eq:L1-dist}, we may assume, by
replacing $\Psi(t)$ with $\Psi(t)-t\Psi'(0_+)$ that
$\Psi'(0)=0$. Then we can apply Lemma~\ref{lem:Psi-1}.
The right hand side of \eqref{eq:Psi-dist-1} increases if
we set $s=0$ in the integrand. We then evaluate
the inner integral, using that $\nu(0,t)=\Psi'(t)$ for almost every $t$.
\end{proof}

\smallskip 

Since the distribution function of $u$ is monotonically decreasing, it
defines a Borel measure on $\RR_+$ by
\begin{equation}\label{mon}
\mu ((a,b])=F(a)-F(b) = 
\lambda_n\bigl(u^{-1}(a,b]\bigr)\,.
\end{equation}
If $u$ is essentially bounded, we restrict this 
measure to the interval $(0,\esup u)$, neglecting
plateaus at $t=0$ and $t=\esup u$.
Consider the Lebesgue-Radon-Nikodym decomposition
$\mu=\mu^{ac}+\mu^s$, where $\mu^{ac}\ll \lambda_1$
and $\mu^s\perp \lambda_1$. This gives rise to a decomposition
of the distribution function
$F=F^{ac}+F^s$, where $F^s(t)=\mu^s(t,\esup u)$
and $F^{ac}$ is absolutely continuous.
By the Fundamental Theorem of Calculus, the density of
$\mu^{ac}$ is given by 
the classical derivative
\begin{equation}\label{eq:F-prime}
-F'(t)=\frac{\Per\left (\{u^\star >t\}\right )}
{|\nabla u^\star |_{\rfloor \partial \{u^\star>t\}}}\,,
\end{equation}
and the derivative of $F^{s}$ vanishes almost everywhere.
The singular part of the measure is given by
$$
\mu^s((a,b]) = F^s(a)-F^s(b)=\lambda_n\bigl(\{x\in (u^\star)^{-1}\left ((a,b]\right ):
\nabla u^\star(x)=0\}\bigr)\,,
$$
it is supported on the set of singular values
$$
\S = \bigl\{t\in (0,\esup u): 
F\ \mbox{is not differentiable at}\ t\bigr\}\,.
$$
Since $u^\star$ is a monotone function of the single radial variable,
the set $\S$ has measure zero.
Clearly $\mu$ is absolutely continuous if and only if $(u^\star)^{-1}(\S )$ 
has measure zero. 
Since the gradient of $u$ vanishes almost everywhere on 
$u^{-1}(\S)$~\cite[Theorem 6.19]{LL}, 
its follows that $\C\supset u^{-1}(\S)$ up
to a set of measure zero, and
\begin{equation}\label{eq:coarea}
\mu^s \left ( (a,b]\right ) \le 
\lambda_n\bigl(\{x\in u^{-1}\left ((a,b]\right ):
\nabla u (x)=0\}\bigr)\,.
\end{equation}
{Note} that strict inequality can occur: when equality holds 
in \eqref{eq:coarea} $u$ is said to be {\em{coarea regular}} 
\cite{AlLi}. The next result shows that every extremal 
is coarea regular (see also \cite{B, CiFu1}), and allows
us to interpret $F^s(t)$ as the distribution
function of the restriction of $u$ to $\C$.

\begin{lemma} \label{lem:C-S}
Under the assumptions of Theorem~\ref{main},
$\C = u^{-1}(\S)$ up to a set of measure zero. 
Furthermore,
$$
F^s(t) =
\lambda_n(\{x\in \C: u(x)>t\}) \,.
$$
\end{lemma}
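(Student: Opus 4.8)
The plan is to establish the reverse of the inequality~\eqref{eq:coarea}. We already know that $\C\supset u^{-1}(\S)$ up to a $\lambda_n$-null set and that $\mu^s((a,b])\le\lambda_n(\{\nabla u=0\}\cap u^{-1}((a,b]))$, so it suffices to show that every extremal is \emph{coarea regular}:
$$
\lambda_n\bigl(\{a<u<b\}\cap\{\nabla u=0\}\bigr)=\mu^s((a,b))\qquad\text{for all }0<a<b<\esup u\,.
$$
The point is that the Brothers--Ziemer characterization~\eqref{eq:balls}--\eqref{eq:grad} lets us transfer the coarea accounting of critical values from $u$ to $u^\star$, for which this identity is exactly the (open-interval form of the) formula for $\mu^s$ recalled just before the lemma.

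First I would pass to the truncations $v=\min\{(u-a)_+,\,b-a\}$, i.e.\ $v=f\circ u$ for $f$ as in~\eqref{eq:slice} with $I=(0,a]\cup[b,\infty)$. Under our hypotheses $v\in W^{1,1}(\RR^n)$, since it is bounded, supported in the finite-measure set $\{u>a\}$, and satisfies $\nabla v=\Chi_{\{a<u<b\}}\nabla u\in L^1$. Applying the coarea formula for $W^{1,1}$ functions to $v$ with the nonnegative Borel weight $\varphi=\Chi_{\{\nabla u\ne0\}}|\nabla u|^{-1}$ (so that $\varphi|\nabla v|=\Chi_{\{a<u<b\}\cap\{\nabla u\ne0\}}$), and using that $\{v>s\}=\{u>a+s\}$, I obtain
$$
\lambda_n\bigl(\{a<u<b\}\cap\{\nabla u\ne0\}\bigr)=\int_a^b\Bigl(\int_{\partial^*\{u>t\}}\Chi_{\{\nabla u\ne0\}}|\nabla u|^{-1}\,d\HH^{n-1}\Bigr)\,dt\,,
$$
and the identical computation applies with $u^\star$ in place of $u$. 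Now by~\eqref{eq:balls} the essential boundaries $\partial^*\{u>t\}$ and $\partial^*\{u^\star>t\}$ are congruent spheres, carrying the same $\HH^{n-1}$-mass $\Per(\{u^\star>t\})$; and by~\eqref{eq:grad} the modulus of the gradient is, for a.e.\ $t$, $\HH^{n-1}$-a.e.\ equal on each of them to one and the same constant $g(t)=|\nabla u^\star|_{\rfloor\partial\{u^\star>t\}}$. Hence for a.e.\ $t$ the two inner integrals coincide --- both equal $\Per(\{u^\star>t\})/g(t)$ when $g(t)\in(0,\infty)$, and both vanish when $g(t)\in\{0,\infty\}$ --- so that
$$
\lambda_n\bigl(\{a<u<b\}\cap\{\nabla u\ne0\}\bigr)=\lambda_n\bigl(\{a<u^\star<b\}\cap\{\nabla u^\star\ne0\}\bigr)\,.
$$

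Subtracting this identity from $\lambda_n(\{a<u<b\})=\lambda_n(\{a<u^\star<b\})$ (equimeasurability) and invoking the $u^\star$-version of the claim, stated just before the lemma, I get
$$
\lambda_n\bigl(\{a<u<b\}\cap\{\nabla u=0\}\bigr)=\lambda_n\bigl(\{a<u^\star<b\}\cap\{\nabla u^\star=0\}\bigr)=\mu^s((a,b))\,,
$$
which is the asserted coarea regularity. Since $\{a<u<b\}\cap\{\nabla u=0\}=\{x\in\C:a<u(x)<b\}$ for $0<a<b<\esup u$, letting $b\uparrow\esup u$ gives $\lambda_n(\{x\in\C:u(x)>t\})=\mu^s((t,\esup u))=F^s(t)$, the second assertion. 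For the first, the displayed identity says that the Borel measure $E\mapsto\lambda_n(\{x\in\C:u(x)\in E\})$ on $(0,\esup u)$ coincides with $\mu^s$ on every open subinterval; since these form a $\pi$-system generating the Borel sets and both measures are $\sigma$-finite there, the measures are equal. As $\mu^s$ is concentrated on the $\lambda_1$-null set $\S$, so is this measure, i.e.\ $\lambda_n(\C\setminus u^{-1}(\S))=0$; combined with the inclusion $u^{-1}(\S)\subset\C$ (up to a null set) already recorded, this gives $\C=u^{-1}(\S)$ up to a $\lambda_n$-null set.

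I expect the delicate point to be the use of the coarea formula in the second paragraph: one must check that it --- stated in terms of the essential boundaries $\partial^*\{u>t\}$ --- interacts correctly with the trace of $|\nabla u|$ on the level spheres appearing in~\eqref{eq:grad}, so that for a.e.\ $t$ the Borel weight $\varphi$ does take the constant value $g(t)^{-1}$ (or $0$) $\HH^{n-1}$-a.e.\ on $\partial^*\{u>t\}$; and one must handle, harmlessly, the $\lambda_1$-null set of heights where $F$ lacks a finite positive derivative (equivalently, by~\eqref{eq:F-prime}, where $g(t)\in\{0,\infty\}$). The remainder is routine bookkeeping with distribution functions.
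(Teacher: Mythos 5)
Your proposal follows essentially the same route as the paper: apply the coarea formula to reduce $\lambda_n(\{\nabla u\ne 0,\,u>t\})$ to an integral of $|\nabla u|^{-1}$ over level-set boundaries, use the Brothers--Ziemer identities \eqref{eq:balls} and \eqref{eq:grad} to equate that integral with the corresponding one for $u^\star$, and then conclude by equimeasurability. The extra care you take --- truncating to $v=f\circ u$ so that the coarea formula for $W^{1,1}$ applies cleanly, and the $\pi$-system step to pass from intervals to all Borel sets --- makes the argument more explicit than the paper's, but the key idea and all the ingredients are the same.
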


\begin{proof}
Since $\C\supset u^{-1}(\S)$, we have only to prove the reverse inclusion.
The coarea formula and the characterization
of extremals in \eqref{eq:balls} and \eqref{eq:grad}
show that
\begin{align*}
\lambda_n(\{x\not\in\C: u(x)>t\})
&= \int_t^\infty \int_{\partial\{u>t\}}
|\nabla u|^{-1} d\HH^{n-1} dt\\
&=\int_t^\infty |\nabla u^\star|^{-1} \Per(\{u^\star>t\})\, dt\\
&= \lambda_n(\{x: \nabla u^\star(x)\ne 0, u^\star(x)>t\})\,.
\end{align*}
Since $u$ and $u^\star$ are equimeasurable, it follows that
\begin{align*}
\lambda_n(\{x\in \C: u(x)>t\}) 
&= \lambda_n(\{x: \nabla u^\star(x)=0, u^\star(x)>t\})\\
&= F^s(t)\,.
\end{align*}
\vskip -1\baselineskip
\end{proof}

\smallskip

In general, $\mu^s$ can be further decomposed
into a sum of (at most) countably many point masses 
that correspond to plateaus of $u$,
and a singular continuous component.
However, one can always
approximate $u$ by functions
whose distribution function has no singular continuous part.

\begin{lemma} (Approximation)
\label{lem:approx}
Let $u\in \W11$ be a nonnegative function
that vanishes at infinity.
There exists an increasing sequence of functions
$u_m\in\W11$ and a decreasing sequence
of sets $\C_m\subset \RR^n$ with
$$
\lim u_m=u\,, \qquad \bigcap \C_m=u^{-1}(\S)\,,
$$
such that $u_m$ is bounded and supported on a set of
finite measure, each level set $\{u_m>t\}$ is also a 
level set of $u$, the distribution function
of $u_m$ has no singular continuous part, and
$$
\nabla u_m = \Chi_{\C_m}\nabla u\,,\qquad (m\ge 1)\,.
$$
\end{lemma}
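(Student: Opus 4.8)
\medskip\noindent\emph{Proof strategy.}
The plan is to build $u_m$ from $u$ by the slice--removal construction \eqref{eq:slice}, choosing the set of removed heights so as to destroy the singular continuous part of the distribution function while changing $u$ as little as possible. Decompose the distribution measure \eqref{mon} on $(0,\esup u)$ as $\mu=\mu^{ac}+\mu^{pp}+\mu^{sc}$; classically its singular part $\mu^s=\mu^{pp}+\mu^{sc}$ is carried by a Borel set $N\subset\S$ with $\lambda_1(N)=0$. First I would fix a decreasing sequence of open sets $V_m\subset(0,\esup u)$ with $\S\subset V_m$ and $\lambda_1(V_m)\to 0$, together with $\delta_m\downarrow 0$ and $\beta_m\uparrow\esup u$, and set
$$
I_m=V_m\cup[0,\delta_m)\cup(\beta_m,\infty),\qquad
f_m(t)=\lambda_1\bigl([0,t]\setminus I_m\bigr),\qquad
u_m=f_m\circ u,\qquad
\C_m=\{x\in\RR^n:u(x)\notin I_m\}.
$$
Then $(I_m)$ is a decreasing sequence of countable unions of intervals whose intersection meets $(0,\esup u)$ in the $\lambda_1$-null set $N^\ast:=\bigcap_m V_m\supset\S$.

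Most of the assertions then read off from the remarks following \eqref{eq:slice}. Since $f_m$ is nondecreasing with $f_m(0)=0$ and $f_m(t)\uparrow\lambda_1([0,t]\setminus N^\ast)=\min\{t,\esup u\}$, the $u_m$ increase to $u$ pointwise, hence in every $L^q$ containing $u$ by dominated convergence. The block $[0,\delta_m)\subset I_m$ makes $u_m$ vanish on $\{u\le\delta_m\}$, so $\{u_m\ne 0\}\subset\{u>\delta_m\}$ has finite measure; the block $(\beta_m,\infty)\subset I_m$ makes $u_m\le f_m(\beta_m)<\infty$, so $u_m$ is bounded; monotonicity of $f_m$ makes every superlevel set of $u_m$ a superlevel set of $u$; and the chain rule \cite[Corollary~6.18]{LL} gives $u_m\in\W11$ with $\nabla u_m=\Chi_{\{u\notin I_m\}}\nabla u=\Chi_{\C_m}\nabla u$. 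Finally $\lambda_n\bigl(u^{-1}(N^\ast\setminus\S)\bigr)=\mu(N^\ast\setminus\S)=0$, because $\mu^{ac}$ vanishes on the $\lambda_1$-null set $N^\ast$ while $\mu^s$ is carried by $\S$; hence the $\C_m$ are monotone in $m$, satisfy $\nabla u_m=\Chi_{\C_m}\nabla u$, and $\bigcap_m u^{-1}(I_m)=u^{-1}(N^\ast)$ coincides with $u^{-1}(\S)$ up to a $\lambda_n$-null set (and the trivial level sets at heights $0$ and $\esup u$).

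The only point needing a real argument is that the distribution function of $u_m$ has no singular continuous part, equivalently that the push-forward $(f_m)_\#\mu$ has none. Here I would use that $f_m$ collapses each component interval of $I_m$ to a single point, so $(f_m)_\#(\mu|_{I_m})$ --- which contains \emph{all} of $\mu^s$, since $\S\subset V_m\subset I_m$ --- is a countable sum of point masses; whereas $\mu|_{I_m^{c}}=\mu^{ac}|_{I_m^{c}}$, and $f_m$ has classical derivative $1$ at $\lambda_1$-a.e.\ point of the closed set $I_m^{c}$, so $f_m$ pushes $\lambda_1|_{I_m^{c}}$ --- hence $\mu^{ac}|_{I_m^{c}}$ --- forward to a measure absolutely continuous with respect to $\lambda_1$. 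Adding the two contributions exhibits the Lebesgue decomposition of $(f_m)_\#\mu$ as $\mu^{ac}_m+\mu^{pp}_m$, with vanishing singular continuous part. I expect the main obstacle to be precisely this bookkeeping of the push-forward under the non-injective Lipschitz map $f_m$: one must verify that collapsing a component of $I_m$ of positive $\mu^{ac}$-mass feeds only the atomic part of $(f_m)_\#\mu$, and that $f_m$ restricted to $I_m^{c}$ is genuinely locally measure-preserving off the countable set $\partial I_m$, so that no singular continuous mass is created along the way.
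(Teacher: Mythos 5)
Your construction is essentially the same as the paper's: both remove an open set of heights containing $\S$ via $f_m(t)=\lambda_1([0,t]\setminus\S_m)$, pad with half-lines near $0$ and $\infty$ to force $u_m$ bounded with finite-measure support, and set $u_m=f_m\circ u$; the paper chooses $\S_m$ so that $\mu(\S_m\cap(t,\esup u))\downarrow\mu^s((t,\esup u))$, you impose the slightly stronger $\lambda_1(V_m)\to 0$, and both are achievable and suffice (your condition forces $\mu^{ac}(V_m\cap(t,\esup u))\to 0$ by absolute continuity, while $\mu^s$ is carried by $\S\subset V_m$). Your push-forward bookkeeping for the absence of a singular continuous part is a more explicit version of the paper's remark that each component $(a,b)$ of $\S_m$ produces a jump of size $F(a)-F(b)$ absorbing all of $\mu^s$; both are valid, and the step you worried about (that $f_m$ restricted to $I_m^c$ pushes $\lambda_1|_{I_m^c}$ forward absolutely continuously) follows from $f_m'=1$ at a.e.\ point of $I_m^c$ plus the area formula for Lipschitz maps.

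One thing you should flag explicitly rather than elide: the three requirements ``$\C_m$ decreasing'', ``$\bigcap\C_m=u^{-1}(\S)$'', and ``$\nabla u_m=\Chi_{\C_m}\nabla u$'' cannot all hold for a slice-removal construction --- this is a sign slip in the lemma statement itself. The paper's proof takes $\C_m=u^{-1}(\S_m)$, which indeed decreases to $u^{-1}(\bigcap\S_m)\supset u^{-1}(\S)$, but then the chain rule gives $\nabla u_m=\Chi_{\RR^n\setminus\C_m}\nabla u$, not $\Chi_{\C_m}\nabla u$. You instead defined $\C_m=\{u\notin I_m\}$, which restores $\nabla u_m=\Chi_{\C_m}\nabla u$, but then $\C_m$ is \emph{increasing}, not decreasing, and $\bigcap_m\C_m=\C_1$; what you actually verify is that $\RR^n\setminus\bigcup_m\C_m$ agrees with $u^{-1}(\S)$ up to a null set. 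Your writeup says ``the $\C_m$ are monotone'' and then computes $\bigcap_m u^{-1}(I_m)$, which is not $\bigcap_m\C_m$ under your definition --- this quietly papers over the mismatch. State plainly that the gradient formula in the lemma should read $\nabla u_m=\Chi_{\RR^n\setminus\C_m}\nabla u$ (or, equivalently, that $\C_m$ should be replaced by its complement), and that with either corrected reading the construction delivers everything claimed.
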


\begin{proof}
Since $\mu^s$ is a regular Borel measure,
there exists a decreasing sequence of open sets $\S_m$
containing $\S$ such that
$$
\lim \mu(\S_m\cap (t,\esup u)) =\mu^s(t,\esup u)
$$
for all $t>0$. If $u$ is unbounded or the support of $u$ does
not have finite measure, we ask
that $\S_m\supset [0,1/m)\cup (m,\infty)$.
Set $f_m(t) = \lambda_1([0,t]\setminus\S_m)$, let $u_m=f_m\circ u$,
and let $\C_m=u^{-1}(\S_m)$.
Then $u_m$ is bounded and supported on a set of finite measure.
Moreover, since $\S_m$ is open, it is the union
of (at most) countably many disjoint intervals.
Therefore 
$$
\nabla u_m(x) = \Chi_{u(x)\not\in \S_m}\nabla u(x)\,
$$
for almost every $x$. 
By construction, $u_m$ increases
monotonically to $u$, and $|\nabla u_m|$ increases
to $|\nabla u|$.  For each connected component $(a,b)$ of $\S_m$, 
the distribution function 
of $u_m$ has a jump of size $F(a)-F(b)$, corresponding
to a plateau of $u_m^\star$.  Since $\S_m\supset \S$,
the distribution function of $u_m$ has no singular 
continuous component.
\end{proof}

\smallskip 

We will also consider functions on $\RR^n$
that do not lie in $\W11$ but in the larger space 
$\BV$. A function
$u$ is locally of bounded variation, if its
distributional derivative is represented by
a vector-valued Radon measure, $[Du]$. 
We denote by $|Du|$ the corresponding variation measure,
and by $||Du||=|Du|(\RR^n)$ its total variation.
The variation measure has Lebesgue-Radon-Nikodym representation
$[Du] = [D^{ac}u] + [D^su]$, where the
absolutely continuous component has density $\nabla u$, and $[D^su]$
is supported on a set of Lebesgue measure zero.
We will always use the {\em precise representative} 
of $u$ that agrees
with its Lebesgue density limit at every point where 
it exists.  
For more information about $\BV$, we refer the reader to
to~\cite{EG,AFP}.

If $u$ is a nonnegative function in $\BV$ that vanishes at
infinity, then $u^\star\in \BV$. Since $u^\star$ is a monotone
function of the radius,
its singular continuous component
is supported on $(u^\star)^{-1}(\{t\in (0,\esup u):F'(t)=0\})$.

\section{Properties of extremals}
\label{sec:balls}

Throughout this section,
we assume that $u$ is an extremal for the P\'olya-Szeg\H{o} 
inequality~\eqref{PS}.  The goal is to prove
the bounds on the variation of $\xi$ in \eqref{eq:var-xi}
and \eqref{eq:TV-xi}.

Let $R$ be the function that assigns to each
point $x\in\RR^n$ the radius of the level set of $u$ 
at height $u(x)$, 
\begin{equation*}
R(x)=\left(\frac{1}{\omega_n} F(u(x))\right)^{\frac{1}{n}}.
\end{equation*}
Since $u^\star$ is a radial function, we can write
$u=u^\star\circ T$, where $T(x)=R(x) \cdot x/|x|$,
as in~\cite{FV1,FV2}.
The next two lemmas provide a bound on $|R(x)-R(y)|$.

\begin{lemma} \label{lem:grad-Phi}
Under the assumptions of Theorem~\ref{main},
if the support of $u$ has finite measure then
$R$ is of bounded variation.
The absolutely continuous part of its variation 
has density $\nabla R$, where
\begin{equation}\label{eq:grad-Phi}
|\nabla R|(x)=\begin{cases}
                               1 & \mbox{if}\ u(x)\not \in \S 
                               \>\mbox{and}\ 0<u(x)<\esup u\,,\\
0&\mbox{otherwise}\,, \end{cases}
\end{equation}
for almost every $x$. 
\end{lemma}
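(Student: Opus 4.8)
The plan is to write $R = g\circ u$, where $g(t):=\bigl(F(t)/\omega_n\bigr)^{1/n}$ is the radius of the ball $\{u^\star>t\}$, and to exploit that, because $\supp u$ has finite measure, $g$ is nonincreasing and bounded, by $R_0:=\bigl(\Ln(\supp u)/\omega_n\bigr)^{1/n}$; in particular $g\in BV(0,\esup u)$. Two things then have to be established: that $R$ is of bounded variation, and that the absolutely continuous part of $DR$ has density $\nabla R = g'(u)\,\nabla u$, which afterwards has to be evaluated.

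For the $\BV$ claim I would argue through level sets. Since $g$ is monotone, every superlevel set $\{R_0-R>\sigma\}$ coincides, up to a null set, with a superlevel set of $u$, hence by \eqref{eq:balls} with a ball of radius at most $R_0$; so $\Per(\{R_0-R>\sigma\})\le n\omega_n R_0^{n-1}$ for $0<\sigma\le R_0$ and it vanishes for $\sigma>R_0$. As $R_0-R$ is bounded and supported in $\supp u$, it lies in $L^1(\RR^n)$, and $\int_0^\infty\Per(\{R_0-R>\sigma\})\,d\sigma\le n\,\Ln(\supp u)<\infty$; the Fleming--Rishel coarea formula then gives $R_0-R\in BV(\RR^n)$, hence $R$ is of bounded variation with $DR=-D(R_0-R)$.

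Next I would invoke the chain rule for the composition of a bounded monotone function with a Sobolev function to conclude that the absolutely continuous part of $DR$ has density $\nabla R(x)=g'(u(x))\,\nabla u(x)$ for a.e.\ $x$, where $g'$ is the a.e.-defined classical derivative of $g$ and the product is read as $0$ wherever $g$ fails to be differentiable at $u(x)$. This reading is harmless: $g=h\circ F$ with $h(y)=(y/\omega_n)^{1/n}$ smooth and positive on $(0,\infty)$, so $g$ is differentiable precisely where $F$ is, i.e.\ off the null set $\S$, and by \cite[Theorem 6.19]{LL} $\nabla u=0$ a.e.\ on $u^{-1}(\S)$ as well as on the level sets $\{u=0\}$ and $\{u=\esup u\}$; consequently $\nabla R=0$ a.e.\ on the complement of $\{0<u<\esup u,\ u\notin\S\}$, which is the second alternative in \eqref{eq:grad-Phi}. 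I expect this chain-rule step to be the main obstacle, because $g$ need not be Lipschitz (both a steep absolutely continuous part and a singular continuous part of $F$ are possible). I would either cite the $BV$ chain rule (see, e.g., \cite{AFP}) or, to keep the argument self-contained, first remove the singular continuous part of the distribution function by Lemma~\ref{lem:approx}, then approximate the remaining monotone profile by Lipschitz functions and pass to the limit, using $u_m\uparrow u$ and $|\nabla u_m|\uparrow|\nabla u|$.

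Finally, on $\{0<u<\esup u,\ u\notin\S\}$ I would evaluate $|\nabla R|$. For $t\notin\S$ the derivative $F'(t)$ exists, and differentiating $g=h\circ F$ and inserting \eqref{eq:F-prime} together with $\Per(\{u^\star>t\})=n\omega_n g(t)^{n-1}$ yields, in one line, $g'(t)=-\,|\nabla u^\star|_{\rfloor\partial\{u^\star>t\}}^{-1}$. On the other hand, the gradient equidistribution \eqref{eq:grad} holds $\HH^{n-1}$-a.e.\ on $\partial\{u>t\}$ for a.e.\ $t$, hence by the coarea formula $\Ln$-a.e.\ on $\{|\nabla u|\neq0\}$, giving $|\nabla u(x)|=|\nabla u^\star|_{\rfloor\partial\{u^\star>u(x)\}}$. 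Since Lemma~\ref{lem:C-S} identifies $\{0<u<\esup u,\ |\nabla u|=0\}$ with $u^{-1}(\S)$ up to a null set, $|\nabla u|$ is positive a.e.\ on this set, so the two expressions above are finite and nonzero there; multiplying them gives $|\nabla R(x)|=|g'(u(x))|\,|\nabla u(x)|=1$ a.e., which is the first alternative in \eqref{eq:grad-Phi}. Passing a.e.-in-$t$ statements to a.e.-in-$x$ statements on $\{|\nabla u|\neq0\}$ via coarea also absorbs the null exceptional set of $t$'s on which the identity for $g'$ might fail, so no further bookkeeping is needed.
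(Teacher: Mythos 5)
Your proposal follows essentially the same route as the paper: express $R=g\circ u$ with $g(t)=(F(t)/\omega_n)^{1/n}$, establish $R\in\BV$ via the coarea/Fleming--Rishel formula and the fact that the (sub)level sets of $R$ are balls contained in $\supp u$, then compute $\nabla R$ by a chain rule, identify $g'(t)=-|\nabla u^\star|^{-1}_{\rfloor\partial\{u^\star>t\}}$ from~\eqref{eq:F-prime}, and conclude $|\nabla R|=1$ a.e.\ off $u^{-1}(\S)$ using the equidistribution identity~\eqref{eq:grad}. The structure and the algebra are correct and match the paper's argument.

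The one place where the proposal falls short of a complete proof is exactly where you flag the difficulty: the chain rule for $R=g\circ u$ when $g$ is merely monotone (hence $\BV$ on $(0,\esup u)$) and in general neither Lipschitz nor absolutely continuous. The generic $\BV$ chain rule in~\cite{AFP} assumes the outer function is Lipschitz, so citing it ``as is'' does not close the gap. The paper handles this by citing the specific computation in~\cite[Eqs.~(3.8)--(3.10)]{FV2}, which is tailored to precisely this situation (a monotone, possibly non-Lipschitz radial profile composed with a Sobolev function, under the level-set structure~\eqref{eq:balls}). Your alternative route — first strip off the singular continuous part of $F$ via Lemma~\ref{lem:approx}, then approximate the remaining profile by Lipschitz functions and pass to the limit — is plausible and in the spirit of how the paper later proves Lemma~\ref{lem:lip}, but as written it is only a sketch: one still has to identify the weak-$*$ limit of the gradients and rule out that a singular part sneaks into the absolutely continuous component in the limit. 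To match the paper, the cleanest fix is to replace the generic reference to~\cite{AFP} by the targeted chain rule from~\cite{FV2}.

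The rest of your argument is fine; in particular, appealing to Lemma~\ref{lem:C-S} to absorb the set $\{0<u<\esup u,\ u\notin\S,\ |\nabla u|=0\}$ into a null set is a harmless extra precaution (the paper leaves this implicit), and your use of the coarea formula to turn a.e.-$t$ statements into a.e.-$x$ statements on $\{|\nabla u|\ne 0\}$ is exactly the mechanism the paper relies on.
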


\begin{proof} The total variation of $R$, {given by}
$$
||DR||=\int_0^\infty \Per(\{R< t\})\, dt\,,
$$
is finite, because its value is bounded
by the radius of the support of $u$
and its (sub-) level sets $\{R<t\}$ 
are smaller balls.

The distributional derivative of $R$ is represented by
the vector-valued measure
$[DR]$. For its absolutely continuous component,
the chain rule yields on $u^{-1}((0,\esup u)\setminus \S)$
\begin{equation*}
\nabla R (x) = \frac{F'(u(x))}{n\omega_n^{1/n}F(u(x))^{1/n'}}
\, \nabla u(x) = 
-\frac{ \nabla u(x)}{|\nabla u^\star |_{\rfloor \{u^\star=u(x)\}}}\,,
\end{equation*}
see \cite[Eqs.(3.8)-(3.10)]{FV2}.
Here, $F'(t)$ is the classical derivative of $F$, and we have used 
\eqref{eq:F-prime} in the second step.
Since $u$ is an extremal, we see from \eqref{eq:grad} 
that the denominator agrees with
$|\nabla u(x)|$, and therefore $|\nabla R|=1$
almost everywhere on $u^{-1}\bigl((0,\esup u)\setminus\S\bigr)$.
Since $\lambda_1(\S)=0$, 
the gradient vanishes almost everywhere on $u^{-1}(\S)$.
\end{proof}

\smallskip 

\begin{lemma}\label{lem:lip} Under the 
assumptions of Theorem~\ref{main},
\begin{equation}\label{eq:lip}
|R(x)-R(y)|\le |x-y|+ 
\left(\frac{1}{\omega_n}
\bigl(\mu^s((u(x), u(y)]\right)^{\frac1n}
\end{equation} for almost every $x,y$
with $u(x)<u(y)$.
\end{lemma}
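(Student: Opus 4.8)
The plan is to prove Lemma~\ref{lem:lip} by a density/approximation argument built on Lemma~\ref{lem:grad-Phi}. First I would reduce to the case where the support of $u$ has finite measure: for general extremals, apply the approximation of Lemma~\ref{lem:approx} to obtain $u_m\uparrow u$ with each $u_m$ bounded, supported on a set of finite measure, and still an extremal (since the level sets of $u_m$ are level sets of $u$, properties \eqref{eq:balls}--\eqref{eq:grad} pass to $u_m$). One then proves \eqref{eq:lip} for each $u_m$ and passes to the limit, checking that $R_m\to R$ at the relevant points and that $\mu_m^s((u_m(x),u_m(y)])\to\mu^s((u(x),u(y)])$, using that the singular parts were built to converge in Lemma~\ref{lem:approx}. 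So the heart of the matter is the finite-measure case, where Lemma~\ref{lem:grad-Phi} tells us $R\in BV$ with $|\nabla R|=\Chi_{u^{-1}((0,\esup u)\setminus\S)}$ almost everywhere.

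Next, in the finite-measure case, I would estimate $|R(x)-R(y)|$ for a.e.\ pair with $u(x)<u(y)$ by integrating $DR$ along the segment joining $x$ and $y$. Since $R\in BV$, for a.e.\ pair of points the restriction of $R$ to the segment $[x,y]$ is a $BV$ function of one variable, and $R(y)-R(x)$ equals the total (signed) measure $DR$ assigns to that segment; hence $|R(x)-R(y)|\le \int_{[x,y]}|\nabla R|\,d\HH^1 + |D^sR|([x,y])$. The absolutely continuous part contributes at most $\HH^1([x,y])=|x-y|$ because $|\nabla R|\le 1$. It remains to bound the singular part $|D^sR|([x,y])$ by $\bigl(\frac{1}{\omega_n}\mu^s((u(x),u(y)])\bigr)^{1/n}$. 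The key point is that the singular part of $DR$ is carried by $u^{-1}(\S)$, i.e.\ by the level sets at the jump heights and the singular-continuous heights; and the sub-level sets $\{R<t\}$ are nested balls, so the jump of $R$ across such a level set is exactly the increment in the radius of these balls, which by the volume formula $\omega_n R^n=F(u)$ equals $\bigl(\frac{1}{\omega_n}\mu^s\bigr)^{1/n}$ of the corresponding slice — concavity of $t\mapsto t^{1/n}$ turning the sum of individual jumps into a single $1/n$-power of the total singular mass.

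The main obstacle, and the step requiring care, is precisely this last estimate: controlling the singular variation of $R$ along a generic segment. Because $R$ is radially monotone-ish only in a loose sense (its level sets are balls with moving centers $\xi_t$), one cannot simply say $R$ is one-dimensional. I expect the cleanest route is the geometric one hinted at in the outline via \eqref{eq:xi-Phi}: the segment from $x$ to $y$ can only cross each spherical level set $\partial\{u>t\}$ in a controlled way, and the total ``radial progress'' along the segment telescopes. Concretely, I would partition the height interval $(u(x),u(y)]$ into the at-most-countably-many atoms of $\mu^s$ (the plateaus) plus the remaining part carrying $\mu^{ac}$ and the singular-continuous piece, bound the jump in $R$ across a plateau of mass $m_k$ by $\bigl(\frac{m_k}{\omega_n}\bigr)^{1/n}$, handle the non-atomic singular part by the same estimate applied to the approximating $u_m$ from Lemma~\ref{lem:approx} (whose distribution functions have no singular continuous part, so $\mu_m^s$ is purely atomic), and finally use subadditivity of $t\mapsto t^{1/n}$, $\sum_k a_k^{1/n}\ge(\sum_k a_k)^{1/n}$ read in the favorable direction, to collapse $\sum_k\bigl(\frac{m_k}{\omega_n}\bigr)^{1/n}$ into $\bigl(\frac{1}{\omega_n}\sum_k m_k\bigr)^{1/n}=\bigl(\frac{1}{\omega_n}\mu^s((u(x),u(y)])\bigr)^{1/n}$. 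Combining the two contributions gives \eqref{eq:lip}.
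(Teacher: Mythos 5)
Your reduction via Lemma~\ref{lem:approx}, the restriction of $R$ to a.e.\ segment $[x,y]$, the split into absolutely continuous and singular parts, and the bound $r^{ac}(1)-r^{ac}(0)\le|x-y|$ from $|\nabla R|\le 1$ all match the paper's proof. The geometric observation that the segment enters each level set $\{u>t\}$ with $t\in(u(x),u(y)]$ exactly once is also the right one.

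The gap is in your final step for the singular part, and it is not a small one: the inequality you invoke runs in the \emph{wrong} direction. You bound the jump of $R$ at a plateau of mass $m_k=F(t_k^-)-F(t_k)$ by $(m_k/\omega_n)^{1/n}$ using $a^{1/n}-b^{1/n}\le(a-b)^{1/n}$, which is fine, but you then need $\sum_k(m_k/\omega_n)^{1/n}\le\bigl(\tfrac{1}{\omega_n}\sum_k m_k\bigr)^{1/n}$. Subadditivity of $t\mapsto t^{1/n}$ gives precisely the reverse, $\sum_k(m_k/\omega_n)^{1/n}\ge\bigl(\tfrac{1}{\omega_n}\sum_k m_k\bigr)^{1/n}$, as you yourself write; there is no ``favorable direction'' in which it can be read, and your conclusion is false in general (take two equal atoms $m_1=m_2$ and $n=2$). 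The loss happened earlier, when you discarded the dependence on the absolute position of each jump in the range of $F$: once you replace $F(t_k^-)^{1/n}-F(t_k)^{1/n}$ by the rootmass $(m_k/\omega_n)^{1/n}$, the bound you want is no longer true.

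The paper's proof keeps the exact telescoping quantity and exploits the \emph{disjointness} of the intervals $(F(t_k),F(t_k^-))\subset[0,F(0^+)]$. Writing $F(t_k^-)^{1/n}-F(t_k)^{1/n}=\int_{F(t_k)}^{F(t_k^-)}\tfrac{1}{n}s^{1/n-1}\,ds$ and noting that $s\mapsto s^{1/n-1}$ is decreasing, one may slide the disjoint integration domains to the origin, where the integrand is largest, to obtain
\[
\sum_k\int_{F(t_k)}^{F(t_k^-)}\tfrac{1}{n}s^{1/n-1}\,ds
\le\int_0^{\sum_k m_k}\tfrac{1}{n}s^{1/n-1}\,ds
=\Bigl(\sum_k m_k\Bigr)^{1/n}.
\]
This ``rearrangement towards the origin'' is the crucial step your argument is missing; everything else in your outline is sound.
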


\begin{proof} 
By Lemma~\ref{lem:approx},
it suffices to consider functions $u$ 
whose support has finite measure and whose
distribution function has no 
singular continuous component.  Let
$$
r(\theta )=R(\theta x+(1-\theta)y)\,,\qquad 0\le \theta\le 1
$$ 
be the restriction
of $R$ to the line segment that
joins $y$ with $x$.  Since
we choose for $R$ its precise representative 
in $\BV$, the restriction is of bounded variation
and the chain rule holds for almost every choice of 
$x,y$ and almost every $\theta$~\cite[Theorem 3.107]{AFP}.
By Lemma~\ref{lem:grad-Phi}, we have 
$$
|r'(\theta)| = |\langle \nabla R( \theta x+(1-\theta)y), x-y \rangle|
\le 1\,,
$$
and obtain for the absolutely continuous part
$$
r^{ac}(1)-r^{ac}(0)\le |x-y|\,.
$$

\begin{figure}[htbp]
\centering%
\includegraphics{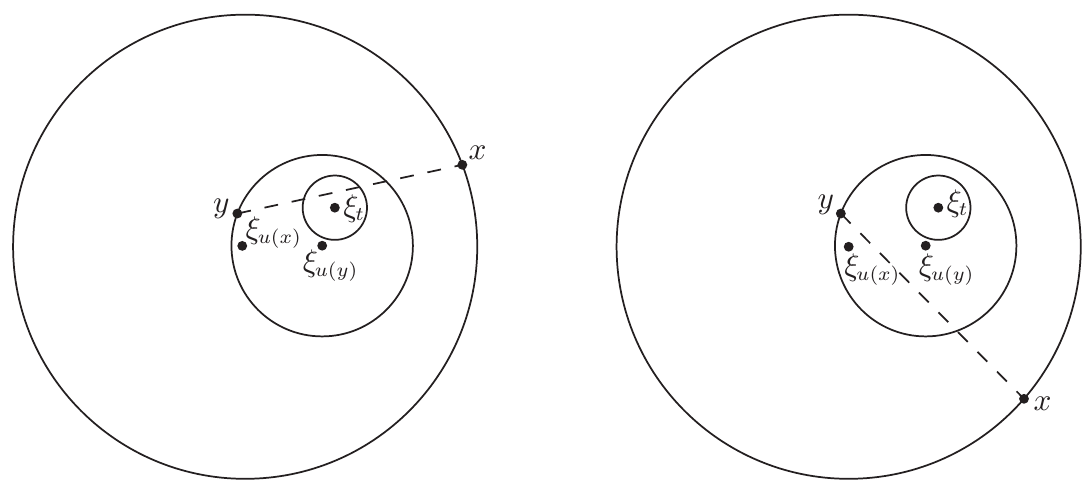}
\caption{\small The line segment from
$y$ to $x$
crosses the boundary of a higher level set either twice 
({\em left}) or never ({\em right)}.}
\label{fig:line}
\end{figure}

For the singular part, recall that $u(x)<u(y)$,
and thus $R(x)>R(y)$.
The line segment enters each level set $\{u>t\}$ 
with $t\in (u(x),u(y)]$ exactly once;
the boundary of a level set outside this range is
crossed either twice, in opposite directions, 
or not at all, see Fig.~\ref{fig:line}. 
When the line segment
enters $\{u>t\}$
for some $t\in \S$, then $R$ experiences
a positive jump of size  
$\omega_n^{-1/n}\bigl(F(t_-)^{1/n} -F(t)^{1/n}\bigr)$;
the jump is reversed upon exit. Since $\S$ is countable,
this yields
\begin{align*}
r^s(1)-r^s(0)
&= \omega_n^{-\frac1n}\sum_{t\in \S\cap (u(x),u(y)]}
\bigl(F(t_-)^{\frac1n}- F(t)^{\frac{1}{n}}\bigr)\\
&= \omega_n^{-\frac1n}
\sum_{t\in\S\cap(u(x),u(y)]}\int_{F(t_-)}^{F(t)}
\frac{1}{n} s^{\frac{1}{n}-1}\, ds\\
&\le 
\biggl(\frac{1}{\omega_n}\sum_{t\in \S\cap (u(x),u(y)]}
\bigl(F(t_-)-F(t)\bigr)\biggr)^{\frac1n}\,.
\end{align*}
We have used that the intervals $(F(t_-), F(t))$ are disjoint
and that the function $s\mapsto s^{1-1/n}$
is decreasing to move the domain
of integration to the origin. By definition, the last sum equals
$\mu^s\bigl((u(x),u(y)]\bigr)$.
The claim follows by adding the inequalities
for $r^{ac}$ and $r^s$.
\end{proof}

\smallskip 

\begin{proof} [Proof of \eqref{eq:var-xi}-\eqref{eq:TV-xi}]\
Insert Lemma~\ref{lem:lip} into \eqref{eq:xi-Phi}
to obtain the bound on $|\xi_s-\xi_t|$. The bound
on the total variation follows by maximizing over
$s,t$ and using Lemma~\ref{lem:C-S}.
\end{proof}

\smallskip 

We have used 
Lemmas~\ref{lem:grad-Phi} and \ref{lem:lip}
to show that the total variation of $\xi$ is bounded by
$(||D^sF||/\omega_n)^{1/n}$.
The proof of Lemma~\ref{lem:lip} yields
the somewhat stronger statement that 
\begin{equation}
||D\xi||\le 
\Bigl\|D^s\left(\frac{F}{\omega_n}\right)^{1/n}\Bigr\|\,.
\end{equation}
A similar computation as in Lemma~\ref{lem:lip}
shows that the singular part of the variation of $R$ is
bounded by the measure of the set
of critical points, 
\begin{equation}
||D^sR||\le  \mu^s((0,\esup u)) + \inf_{t<\esup u} F(t)\,.
\end{equation}
Here, the last term on the right
represents the possible plateau at the top, which does
not contribute to the variation of $\xi$.

\section{Proof of the main results}
\label{sec:main}

\begin{proof}[Proof of Theorem \ref{main}]
Since $u$ is an extremal, each level set $\{u>t\}$ is a ball
centered at $\xi_t$. Let $\xi_\infty$ be the center of the ball 
$$
\bigcap_{t\in (0,\esup u)}\,\{u>t\}
$$
(which may consist of a single point),
and consider the translation $\tau(x)=x-\xi_\infty$. 
By \eqref{eq:xi-Phi} and Lemma~\ref{lem:lip},
the distance between the level sets $\{u>t\}$
and $\{u^\star\circ\tau>t\}$ is bounded by
$$
|\xi_t-\xi_\infty|\le \left(\frac{F^s(t)}{\omega_n}\right)^{\frac1n}\,.
$$
Since the symmetric difference between
two balls of equal radius in $\RR^n$ satisfies
\begin{equation}
\label{eq:symmdiff}
\Ln((\xi+B) \bigtriangleup (\eta +B) )\le 
2\omega_{n-1}\left(\frac{\lambda_n(B)}{\omega_n}\right)^{\frac{1}{n'}}
\cdot |\xi-\eta| \,,
\end{equation}
it follows that
$$
\lambda_n(\{u>t\}\bigtriangleup \{u^\star\circ\tau>t\})
\le K_n \,F(t)^{\frac{1}{n'}}\,F^s(t)^{\frac1n}\,,
$$
where {$K_n=2\omega_{n-1}/\omega_n$}.
From Lemma~\ref{lem:Psi-2} with $\Psi(t)=t^q$, we deduce
\begin{align}\label{key<}
\notag
||u-u^\star \circ \tau||_q^q 
&\le K_n\int_0^\infty F (t)^{\frac{1}{n'}}\,
F^s(t) ^{\frac1n}\, qt^{q-1}dt\\
&\le K_n \left (\int_0^\infty F(t) \,qt^{q-1}dt\right )^{\frac{1}{n'}}
\left (\int_0^\infty F^s(t)\,qt^{q-1}\, dt\right )^{\frac1n}\\
\notag &= K_n\, ||u||_q^{q/n'} \, \|u\Chi_{\C}\|_q^{q/n}\,.  
\end{align}
We have applied 
H\"older's inequality with exponents $1/n'$ and $1/n$,
and used Lemma~\ref{lem:C-S} to interpret 
$F^s$ as the distribution function of $u\Chi_\C$.
\end{proof}

\smallskip 
The basic estimate in
the first line of \eqref{key<} can be used to derive 
other bounds on  $u-u^\star\circ\tau$, for example
\begin{equation}
\frac{||u-u^\star \circ \tau||^q_q}{\| u\|^q_q}\le 
K_n\,
\sup_{0<t<\esup u}\left (\frac{\Ln (\C\cap \{u>t\})}{\Ln 
(\{u >t\})}\right )^{\frac 1 {n}}\,.
\end{equation}
The ratio on the right hand side can be viewed
as the density of $C$ in the level set $\{u>t\}$.
It is always strictly less than one, because $\C$ does not contain
the possible plateau at $\esup u$.  
Alternately, we can interpret $n\omega_n^{1/n}F(t)^{1/n'}$ as the
perimeter of the level set $\{u>t\}$ and apply the coarea formula
to \eqref{key<} to obtain
$$
||u-u^\star \circ \tau||_q^q  \le K_n \int F^s(u)^{\frac 1 n} |\nabla u^q|\, dx\,;
$$
if $\C$ has finite measure, this implies \eqref{eq:finite}
with $p=1$.

\begin{proof}[Proof of Theorem~\ref{finite}] 
Let $\xi_\infty$ and $\tau$ be as
in the proof of Theorem~\ref{main}, and consider
Lemma~\ref{lem:Psi-1} with $\Psi(t)=t^p$.
Since the intersection between any pair of balls 
decreases with the distance of their centers, 
it follows that
$$
||u-u^\star\circ\tau||_p^p 
\le ||u^\star -u^\star\circ \tilde \tau||_p^p
\le ||\nabla u^\star||_p\cdot ||D\xi|| 
\,,
$$
where $\tilde \tau(x)=x- ||D\xi|| \, w$ for some unit 
vector $w$. The bound on the total variation of $\xi$
in \eqref{eq:TV-xi} yields the claim.
\end{proof}

\smallskip

\begin{proof}[Proof of Theorem~\ref{Morrey}] 
Let $\xi_\infty$ and $\tau$ be
as in the proof of Theorem~\ref{main}. 
Since $u(x)=u^\star(x-\xi_{u(x)})$ and 
$(u^\star\circ\tau)(x)=u^\star(x-\xi_\infty)$, 
Morrey's inequality says that
$$
|u(x)-(u^\star\circ \tau)(x)| 
\le M_{n,p} \, ||\nabla u||_p \cdot
|\xi_{u(x)}-\xi_\infty|^{1-\frac{n}{p}}\,.
$$
The claim follows with \eqref{eq:TV-xi}.
\end{proof}


\section{Dirichlet-type functionals on $\BV$}
\label{sec:Dirichlet}

At last, we turn to more general convex gradient functionals.
A {\em Young function} is a nonnegative, nondecreasing
convex function $\Phi$ on $\RR_+$ with $\Phi(0)=0$.
The {\em Dirichlet functional} associated with this
Young function is defined by
$$
\F(u) = \int \Phi(|\nabla u|)\, dx\,,
$$ 
provided that the distributional gradient of $u$ is 
locally integrable.
If $\Phi$ grows linearly at infinity, the functional
is extended to $\BV$ by
$$
\F(u) = \int \Phi(|\nabla u|)\, dx + \phi \, ||D^s u||\,,
$$ 
where $\phi =\lim_{t \to\infty} \Phi(t)/t$,
and $||D^s u||$ is the singular part of the total
variation.  Then
$\F(u)$ is always well-defined but it may 
take the value $+\infty$. 
In this setting, the
P\'olya-Szeg\H{o} inequality says that
\begin{equation}
\label{eq:PS-D}
\F(u^\star)\le \F(u)\,
\end{equation}
for all $u\in\BV$~\cite{CiFu1}. 

\begin{corollary} \label{cor:main} Let $\F$ be a
Dirichlet functional on $\RR^n$ given by a strictly increasing 
Young function $\Phi$.  Let $u\in\BV$ 
be a nonnegative function that vanishes 
at infinity, and let $u^\star$ be its symmetric decreasing
rearrangement.  If
$$
\F(u)=\F(u^\star)<\infty\,,
$$
then there exists a translation $\tau$ such that
$$
\int \Psi(|u-u^\star\circ \tau|)\, dx
\le K_n\,\left(\int \Psi(u)\, dx\right)^{1/n'}
\left(\int_{\C_\Phi} \Psi(u)\, dx\right)^{1/n}\,
$$
for every Young function $\Psi$ such that $\Psi\circ u$ is integrable.
Here,
$$
\C_\Phi =\left 
\{x\in\RR^n: 0< u(x)<\esup u, |\nabla u(x)|\in \{0\}\cup V\right\}\,,
$$
and $V$ is the maximal open subset of $\RR_+$ such that 
$\Phi$ is affine on each connected component of $V$.
The constant is given by $K_n=2\omega_{n-1}/ \omega_n$.
\end{corollary}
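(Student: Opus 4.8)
The plan is to re-run the proof of Theorem~\ref{main}, with $\C$ replaced throughout by $\C_\Phi$ and the power $t\mapsto t^{q}$ replaced by the Young function $\Psi$. The input is the characterization of the extremals of a Dirichlet functional $\F$ with strictly increasing $\Phi$, which parallels the classical one (cf.~\cite{BZ,CiFu1}): equality $\F(u)=\F(u^\star)<\infty$ forces the level sets $\{u>t\}$ to be balls $\xi_t+\{u^\star>t\}$ for almost every $t$ (here it is the \emph{strict monotonicity}, not the strict convexity, of $\Phi$ that makes the isoperimetric inequality sharp at almost every level), and the coarea formula gives $\int_{\partial\{u>t\}}|\nabla u|^{-1}\,d\HH^{n-1}=-F'(t)$. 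Equality in the Jensen inequality for the convex function $h\mapsto h\,\Phi(1/h)$ (the perspective of $\Phi$) then shows that, for almost every $t$, the restriction of $|\nabla u|$ to $\partial\{u>t\}$ is either $\HH^{n-1}$-a.e.\ equal to the constant $|\nabla u^\star|$ on the corresponding sphere, or else takes values in a single connected component of $V$. Call $t$ a \emph{good} level if $|\nabla u|\equiv|\nabla u^\star|$ on $\partial\{u>t\}$ with $|\nabla u^\star|\notin\{0\}\cup V$, and a \emph{bad} level otherwise; then every bad sphere lies in $\C_\Phi$ while no point of a good sphere does, and, since $\nabla u=0$ a.e.\ on $u^{-1}(\S)$, one gets $\C_\Phi=u^{-1}(\{\text{bad levels}\})$ up to a null set and $\lambda_n(\C_\Phi\cap u^{-1}((a,b]))=\mu(\{\text{bad levels}\}\cap(a,b])$ for $0<a<b\le\esup u$.

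The heart of the matter is the analogue of Lemmas~\ref{lem:grad-Phi}--\ref{lem:lip}: for almost every $x,y$ with $u(x)<u(y)$,
$$
|R(x)-R(y)|\le |x-y|+\left(\frac{1}{\omega_n}\,\lambda_n\bigl(\C_\Phi\cap u^{-1}((u(x),u(y)])\bigr)\right)^{\frac1n}\,,
$$
which, inserted into \eqref{eq:xi-Phi}, gives $|\xi_s-\xi_t|\le(\lambda_n(\C_\Phi\cap u^{-1}((s,t]))/\omega_n)^{1/n}$ and in particular $\|D\xi\|\le(\lambda_n(\C_\Phi)/\omega_n)^{1/n}$. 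As in Lemma~\ref{lem:lip}, the approximation of Lemma~\ref{lem:approx} reduces matters to the case where the distribution function of $u$ has no singular continuous part; one then restricts $R=(F\circ u/\omega_n)^{1/n}$ to the segment $[y,x]$, along which, for almost every choice of $x,y$, $u$ decreases and $R$ increases monotonically. The increase of $R$ splits according to the level: over the good levels one has $|\nabla u|=|\nabla u^\star|$ at the crossing point---here $|\nabla u|=|\nabla u^\star|$ plays the role that \eqref{eq:grad} played in Lemma~\ref{lem:grad-Phi}---so $R$ grows no faster than arc length and contributes at most $|x-y|$; over the bad levels $R$ may grow faster than arc length, but its variation there coincides with the variation of $\omega_n^{-1/n}(F\circ u)^{1/n}$ over those levels, which---because these levels sweep out pairwise disjoint intervals of values of $F$ of total length $\lambda_n(\C_\Phi\cap u^{-1}((u(x),u(y)]))$, and $\rho\mapsto\rho^{1/n-1}$ is decreasing---is bounded by the claimed $n$-th root, by the same ``shift to the origin'' computation as in Lemma~\ref{lem:lip} (now absorbing the jumps at $\S$ and the plateaus of $u$ as well). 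Adding the two contributions gives the estimate.

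From here one argues as in Section~\ref{sec:main}. With $\xi_\infty$ the center of $\bigcap_{0<t<\esup u}\{u>t\}$ and $\tau(x)=x-\xi_\infty$, letting the upper level tend to $\esup u$ in the estimate $|\xi_s-\xi_t|\le(\lambda_n(\C_\Phi\cap u^{-1}((s,t]))/\omega_n)^{1/n}$ gives $|\xi_t-\xi_\infty|\le(F_\Phi^s(t)/\omega_n)^{1/n}$ for every $t$, where $F_\Phi^s(t):=\lambda_n(\{x\in\C_\Phi:u(x)>t\})$ is the distribution function of $u\Chi_{\C_\Phi}$. The symmetric difference estimate \eqref{eq:symmdiff} then bounds $\lambda_n(\{u>t\}\bigtriangleup\{u^\star\circ\tau>t\})$ by $K_n F(t)^{1/n'}F_\Phi^s(t)^{1/n}$, and Lemma~\ref{lem:Psi-2}, applied to the Young function $\Psi$, yields
$$
\int\Psi(|u-u^\star\circ\tau|)\,dx\le K_n\int_0^\infty F(t)^{\frac1{n'}}\,F_\Phi^s(t)^{\frac1n}\,\Psi'(t)\,dt\,.
$$
H\"older's inequality with exponents $1/n'$ and $1/n$, together with the layer-cake representation (valid since $\Psi(0)=0$), turns the right-hand side into $K_n(\int\Psi(u)\,dx)^{1/n'}(\int_{\C_\Phi}\Psi(u)\,dx)^{1/n}$, which is finite because $\Psi\circ u\in L^1$; this is the assertion.

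The main obstacle is the estimate for $R$ in the second paragraph. Once $\Phi$ is allowed to be affine on an interval, $R$ is no longer $1$-Lipschitz along lines on the corresponding level sets, so Lemma~\ref{lem:grad-Phi} breaks down there; instead one must use that on such a level set the gradient of $u$, although not constant, is confined to a single affine piece of $\Phi$ and therefore has the correct harmonic mean $|\nabla u^\star|$, so that the variation of $R$ over those levels remains controlled by the $\C_\Phi$-volume they enclose. Some further care is needed for the $\BV$ features---cliffs of $u$ and of $u^\star$, and the term $\phi\,\|D^su\|$ in $\F$---but this is parallel to~\cite{CiFu1} and, as in the $W^{1,1}$ case, is absorbed into the approximation of Lemma~\ref{lem:approx}. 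Everything else repeats Sections~\ref{sec:balls}--\ref{sec:main} with $\C$ replaced by $\C_\Phi$ and $t^q$ by $\Psi$.
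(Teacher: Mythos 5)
The paper's proof of Corollary~\ref{cor:main} rests on a different and substantially cleaner device than what you propose, namely an \emph{additive splitting of the functional}. Setting $\S_\Phi=\S\cup\{F'=0\}\cup\{|\nabla u^\star|_{\rfloor\partial\{u^\star>t\}}\in V\}$ and $f(t)=\lambda_1((0,t)\setminus\S_\Phi)$, the paper observes that
$\F(u)=\F(f\circ u)+\F(u-f\circ u)$ and the same for $u^\star$; since the P\'olya--Szeg\H{o} inequality holds for each summand, equality forces $f\circ u$ to itself be an extremal. After removing the slices at heights in $\S_\Phi$, the extremal $f\circ u$ satisfies the gradient-equidistribution condition~\eqref{eq:grad} at a.e.\ level, so the already-proved estimates of Section~\ref{sec:balls} apply \emph{verbatim} to $f\circ u$, giving $|\xi_t-\xi_\infty|\le (F^s_\Phi(t)/\omega_n)^{1/n}$ because the level sets of $u$ at heights outside $\S_\Phi$ are level sets of $f\circ u$. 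Nothing from Section~\ref{sec:balls} is re-proved. You instead attempt to re-derive the analogue of Lemma~\ref{lem:lip} directly for $\C_\Phi$, which is a genuinely different route; its appeal is that it is self-contained and does not require noticing the additivity of $\F$ across the slice decomposition, while the paper's route buys economy: it reuses the hard lemma as a black box.

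Your route can in principle be made to work, but as written there are concrete gaps. First, the claim that along the segment $[y,x]$ ``$u$ decreases and $R$ increases monotonically'' is false in general --- the segment may cross boundaries of higher level sets twice and then exit, exactly the possibility Fig.~\ref{fig:line} and the cancellation argument in Lemma~\ref{lem:lip} are set up to handle; the paper never assumes monotonicity. Second, and more seriously, your control of the ``bad-level'' contribution is not the jump computation of Lemma~\ref{lem:lip}. At a bad level with $|\nabla u^\star|_{\rfloor\partial\{u^\star>t\}}\in V$, the distribution function $F$ is absolutely continuous with $F'(t)\neq 0$, so the restriction $r(\theta)=R(\gamma(\theta))$ has no jump there; instead $|\nabla R|=|\nabla u|/|\nabla u^\star|$ can be strictly greater than $1$, so the \emph{absolutely continuous} part $r^{ac}(1)-r^{ac}(0)$ is no longer bounded by $|x-y|$. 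You need to split the level-parametrized integral $h(u(x))-h(u(y))=-\int_{(u(x),u(y)]}h'(t)\,dt+\text{jumps}$, with $h(t)=(F(t)/\omega_n)^{1/n}$, into good and bad levels, bound the good part by $|x-y|$ via the $1$-Lipschitz function $G\circ u$ where $G'=h'\Chi_{\{\text{good}\}}$, and apply the shift-to-origin estimate only to the bad part $-\int_{\{\text{bad}\}}h'\,dt+\text{jumps}$. The phrase ``its variation there coincides with the variation of $\omega_n^{-1/n}(F\circ u)^{1/n}$ over those levels'' is tautological and glosses over exactly this division; without the explicit level-set decomposition, the claimed bound on the bad AC part is unjustified. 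Finally, the BV features ($u$ itself may have a jump set; $\F$ carries the term $\phi\,\|D^su\|$) are deferred to ``parallel to~\cite{CiFu1}'' without noting that the paper's splitting handles this automatically because $\|D^s u\|=\|D^s(f\circ u)\|+\|D^s(u-f\circ u)\|$, which is what makes $\F$ additive and the reduction to $f\circ u$ legitimate.
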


Note that the conclusion depends on the Young function 
$\Phi$ only through the set $V$; in particular, 
{
if $\Phi$ is {\em strictly} convex then
$V=\emptyset$ and $\C_\Phi=\C$.}

\begin{proof} [Proof of Corollary~\ref{cor:main}]
Cianchi and Fusco established in~\cite{CiFu2} that 
\eqref{eq:balls} holds under the given assumptions,
i.e., the level sets of extremals  are balls. 
Moreover, \eqref{eq:grad} holds for a.e. $t\in(0,\esup u)$
such that $|\nabla u^\star|_{\rfloor \partial \{u^\star>t\}}\not \in V$,
i.e., $|\nabla u|$ is equidistributed on $\partial\{u>t\}$
for $\HH^{n-1}$-almost every $x\in\partial \{u>t\}$.  
Let
$$
\S_\Phi= \S\cup \{t>0: F'(t)=0\} \cup \{t>0: 
|\nabla u^\star|_{\rfloor \partial \{u^\star>t\}}\in V\}\,,
$$
and set $f(t)=\lambda_1((0,t)\setminus\S_\Phi)$. Then
$f\circ u^\star$ is absolutely continuous, 
its distribution function has no singular continuous
component,
and both $f\circ u^\star$ and $u^\star-f\circ u^\star$ 
 are radially decreasing functions
that vanish at infinity. By definition, 
$\C_\Phi= u^{-1}(\S_\Phi)$. Since
$$
\nabla (f\circ u) = \Chi_{\C_\Phi} \nabla u\,,
$$
the set of critical points of $f\circ u$ is given by $\C_\Phi$.
Furthermore,
$$
\F(u)=\F(f\circ u) + \F(u\!-\!f\circ u)\,,
$$
and correspondingly for $u^\star$. 
The P\'olya-Szeg\H{o} inequality holds for each
summand, and therefore
$f\circ u$ and $u-f\circ u$ must be extremals.
In particular, $f\circ u$ satisfies \eqref{eq:var-xi}. 
Since every level set $\{u>t\}$
with $t\not\in \S_\Phi$ is also a level set
of $f\circ u$, it follows that
\begin{equation}
\label{eq:var-xi-D}
|\xi_t-\xi_\infty |\le 
\left(\frac{F^s_\Phi(t)}{\omega_n} \right)^{\frac1n}\,,
\end{equation}
where 
$$
F^s_\Phi(t) = \lambda_n(\{x\in\C_\Phi: u(x)>t\})\,.
$$
By Lemma~\ref{lem:Psi-2} and H\"older's inequality,
\begin{align*}
\int \Psi(|u-u^\star\circ\tau|)\, dx &\le
K_n \int_0^\infty F(t)^{\frac{1}{n'}} \,
F^s_\Phi(t)^{\frac1n}\, 
\Psi'(t)\, dt\\
&\hskip -1.5cm 
\le K_n \!\left(\int_0^\infty F(t) \Psi'(t)\, dt\right)^{\frac{1}{n'}}\!
\left(\int_0^\infty F^s_\Phi(t)\,\Psi'(t)\, dt\right)^{\frac1n}.
\end{align*}
Using the layer-cake principle, we recognize the
integrals {in the last line} as 
$\int \Psi(u)\, dx$ and $\int_{C_\Phi} \Psi(u)\, dx$.
As in Theorems~\ref{main}-\ref{Morrey}, we 
can equivalently replace $u$ with $u^\star$ in 
these integrals and in the  definition of $\C_\Phi$.
\end{proof}

\smallskip 
If $u$ is supported on a set of
finite measure, then
we can use \eqref{eq:Psi-2} with $\Psi(t)=t$
and apply Jensen's inequality once more to conclude that
\begin{align*}
||u-u^\star\circ\tau||_1 &
\le ||\nabla u||_1 \cdot \left(\frac{\lambda_n(\C_\Phi)}
{\omega_n}\right)^{\frac1n}\\
&\le 
\Phi^{-1} \left(\frac{\F(u)}{\lambda_n(\supp u)}\right)
\cdot \lambda_n(\supp u)
\left(\frac{\lambda_n(\C_\Phi)}{\omega_n} \right)^{\frac1n}\,.
\end{align*}
Since $\C_\Phi\subset\supp u$, this
implies~\cite[Theorem 1.1]{CiFu2}. 
For $\Phi(t)=t^p$, we recover \eqref{eq:CF} with $L_n=\omega_n^{-1/n}$.

\begin{corollary} 
\label{cor:finite}
Under the assumptions of
Corollary~\ref{cor:main}, if $\C_\Phi$ has finite measure, 
then there exists a translation $\tau$ such that
$$
\int \Phi\left(|u-u^\star \circ \tau|\cdot \left(\frac{\lambda_n(\C_\Phi)}
{\omega_n}\right)^{-\frac1n}\right)\, dx \le \F(u)\,.
$$
\end{corollary}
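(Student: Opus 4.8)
\medskip\noindent\textbf{Proof plan.}
The plan is to run the argument of Theorem~\ref{finite} with the power $t\mapsto t^p$ replaced by the rescaled Young function $\Psi(t)=\Phi(t/\rho)$, where $\rho=\bigl(\lambda_n(\C_\Phi)/\omega_n\bigr)^{1/n}$; one may assume $\rho>0$, since otherwise Corollary~\ref{cor:main} already gives $u=u^\star\circ\tau$ almost everywhere. Take $\xi_\infty$ and the translation $\tau$ as in the proof of Corollary~\ref{cor:main}, fix a unit vector $w$, and set $\tilde\tau(x)=x-\rho w$. Each level set $\{u>t\}$ is the ball $\xi_t+\{u^\star>t\}$, and \eqref{eq:var-xi-D} together with $F^s_\Phi(t)=\lambda_n(\{x\in\C_\Phi:u(x)>t\})\le\lambda_n(\C_\Phi)$ gives $|\xi_t-\xi_\infty|\le\rho$ for every $t$; in contrast, the level sets of $u^\star$ and of $u^\star\circ\tilde\tau$ are balls of equal radii whose centers lie at the fixed distance $\rho$.

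The first step is the geometric comparison $\int\Psi(|u-u^\star\circ\tau|)\,dx\le\int\Psi(|u^\star-u^\star\circ\tilde\tau|)\,dx$. I would obtain it by applying Lemma~\ref{lem:Psi-1} to the convex function $t\mapsto\Psi(t)-t\,\Psi'(0_+)$ and identity \eqref{eq:L1-dist} to the residual linear term (as in the proof of Lemma~\ref{lem:Psi-2}): this writes $\int\Psi(|u-u^\star\circ\tau|)\,dx$ as an integral of quantities of the form $\lambda_n(B_1\setminus B_2)$ and $\lambda_n(B_1\bigtriangleup B_2)$, where $B_1,B_2$ are balls (with radii prescribed by the radial profile of $u^\star$) whose centers lie at distance at most $\rho$. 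Since the measure of the intersection of two balls, hence of any such set-difference or symmetric difference, is a nonincreasing function of the distance of the centers (the convolution of two symmetric-decreasing indicators being symmetric-decreasing), each such term can only grow when the centers are separated to distance exactly $\rho$, which turns the integrand into the corresponding one for $u^\star$ and $u^\star\circ\tilde\tau$; integrating gives the comparison.

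The second step is the difference-quotient bound $\int\Phi\bigl(\rho^{-1}|u^\star(x)-u^\star(x-\rho w)|\bigr)\,dx\le\F(u^\star)=\F(u)$. Writing $g=u^\star$ and mollifying, $g_\varepsilon=g*\eta_\varepsilon\in C^\infty$, I would express $g_\varepsilon(x)-g_\varepsilon(x-\rho w)$ as the integral of $\partial_w g_\varepsilon$ over a segment of length $\rho$ and use Jensen's inequality to bound $\int\Phi\bigl(\rho^{-1}|g_\varepsilon(x)-g_\varepsilon(x-\rho w)|\bigr)\,dx$ by $\int\Phi(|\nabla g_\varepsilon|)\,dx$; then, using $|\nabla g_\varepsilon|\le|Dg|*\eta_\varepsilon$, the elementary inequality $\Phi(a+b)\le\Phi(a)+\phi\,b$ with $\phi=\lim_{t\to\infty}\Phi(t)/t$, and one more application of Jensen, I would bound the latter by $\int\Phi(|\nabla g|)\,dx+\phi\,||D^sg||=\F(g)$, and finish by letting $\varepsilon\to0$ and applying Fatou's lemma. (When $\Phi$ is superlinear, $\F(u^\star)=\F(u)<\infty$ forces $u^\star\in\W11$, so the singular term is absent and only the first Jensen step is needed.) Chaining the two steps, and recalling $\F(u^\star)=\F(u)$, yields the assertion of the corollary.

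I expect the main obstacle to be the second step: the $\Phi$-difference-quotient inequality on $\BV$, with the singular part of the variation correctly accounted for through $\phi\,||D^su^\star||$ --- though this is by now a standard consequence of the relaxation and lower-semicontinuity structure of $\F$. The reduction to $\Psi'(0_+)=0$ and the ball-monotonicity bookkeeping in the first step are routine.
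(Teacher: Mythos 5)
Your proposal is correct and follows essentially the same two-step structure as the paper's proof: (i) a geometric monotonicity argument reducing the quantity to $\int\Psi(|u^\star-u^\star\circ\tilde\tau|)\,dx$ for a single translated copy, via Lemma~\ref{lem:Psi-1} together with the fact that set-differences of balls increase with the distance of the centers; and (ii) a difference-quotient bound via Jensen's inequality, finishing with the P\'olya-Szeg\H{o} inequality \eqref{eq:PS-D}. The minor bookkeeping differences --- you shift by $\rho=(\lambda_n(\C_\Phi)/\omega_n)^{1/n}$ directly rather than by $||D\xi||\le\rho$ and then enlarge at the end --- are harmless, since $\Phi$ is nondecreasing.

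The one place where your version is actually \emph{more} careful than the paper's is the second step. The paper writes $u^\star(x)-u^\star\circ\tilde\tau(x)=\int_0^1\langle\nabla u^\star(x+\theta||D\xi||w),||D\xi||w\rangle\,d\theta$ and applies Jensen, which is literally valid only when $u^\star\in\W11$; if $u^\star$ has a singular BV part (possible when $\Phi$ grows linearly), the one-dimensional sections can jump and the formula is not exact. Your mollification argument, combined with $|\nabla g_\varepsilon|\le|Dg|*\eta_\varepsilon$, the inequality $\Phi(a+b)\le\Phi(a)+\phi\,b$, and Fatou, correctly routes the singular contribution into the term $\phi\,||D^su^\star||$ and lands precisely on $\F(u^\star)=\F(u)$; your remark that the superlinear case forces $||D^su^\star||=0$ is also apt. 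In short: same route as the paper, with the BV singular part tracked more explicitly in step two; no gaps.
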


\begin{proof} Let $\xi_\infty$ and 
$\tau$ be as in the proof of Theorem~\ref{main}.
We will show that
\begin{equation}
\label{eq:Psi-2}
\int\Psi(|u-u^\star\circ\tau|)
\, dx
\le  \int \Psi\left(|\nabla u|\cdot ||D\xi||\right)\, dx
\end{equation}
for every Young function $\Psi$ such
that the right hand side is finite. 
We then set $\Psi(t)=\Phi(t /||D\xi||)$, and use that
$$
||D\xi|| \le \left(\frac{\lambda_n(C_\Phi)}{\omega_n}\right)^{\frac1n}
$$
by \eqref{eq:var-xi-D}.

For \eqref{eq:Psi-2}, we combine \eqref{eq:L1-dist} with Lemma~\ref{lem:Psi-1}
and argue as in the proof of Theorem~\ref{finite}
that the integral on the left hand side increases if
$u$ is replaced by $u^\star\circ\tilde\tau$, 
where $\tilde \tau(x)=x-||D\xi||\,w$ for
some unit vector $w$. Since
$$
u^\star(x)-u^\star\circ\tilde\tau(x)
= \int_0^1 
\langle
\nabla u^\star(x+\theta \, ||D\xi||\,w), ||D\xi||\, w\rangle\, d\theta\,,
$$
Jensen's inequality implies that
\begin{align*}
\int \Psi(|u^\star-u^\star\circ\tilde \tau|)\, dx
&\le \int \Psi\left(
\int_0^1\bigl|\nabla u^\star(x+\theta \, ||D\xi||\, w)\bigr|\, d\theta
\cdot ||D\xi||
\!\right) dx\\
& \le \int \Psi\left(|\nabla u^\star|\cdot||D\xi||\right)\, dx \\
&\le  \int \Psi\left(|\nabla u|\cdot ||D\xi||\right)\, dx\,.
\end{align*}
The last step holds by the P\'olya-Szeg\H{o} inequality in
\eqref{eq:PS-D}.  
\end{proof}


\end{document}